\newtheorem{theorem}{Theorem}[section]
\newtheorem{lemma}[theorem]{Lemma} 
\newtheorem{corollary}[theorem]{Corollary}
\newtheorem{proposition}[theorem]{Proposition}
\newtheorem{remark}[theorem]{Remark}
\newcommand{\R}{\mathbb{R}}
\newcommand{\N}{\mathbb{N}}
\newcommand{\Z}{\mathbb{Z}}
\author{Mathias Beiglb\"ock, George Lowther, Gudmund Pammer, Walter Schachermayer}
\title{Faking Brownian motion with continuous Markov martingales}
\begin{document}

\maketitle

\begin{abstract}
	Hamza-Klebaner \cite{HaKl07}  posed the problem of constructing martingales with Brownian mar\-ginals that differ from Brownian motion, so called \emph{fake Brownian motions}. Besides its theoretical appeal, the problem represents the quintessential version of the ubiquitous fitting problem in mathematical finance where the task is to construct martingales that satisfy marginal constraints imposed by market data.
 
	Non-continuous solutions to this challenge were given by Madan-Yor \cite{MaYo02}, Hamza-Kle\-baner \cite{HaKl07}, Hobson \cite{Ho13}, and Fan-Hamza-Klebaner \cite{FaHaKl15} whereas continuous (but non-Markovian) fake Brownian motions were constructed by Oleszkiewicz \cite{Ol08}, Albin \cite{Al08}, Baker-Donati-Yor \cite{BaDoYo11}, Hobson \cite{Ho16}, Jourdain-Zhou \cite{JoZh16}. 
	In contrast it is known from Gyöngy \cite{Gy86}, Dupire \cite{Du94}, and ultimately Lowther \cite{Lo08b} that Brownian motion is the unique continuous strong Markov martingale with Brownian marginals. 

	We took this as a challenge to construct  examples of a ``very fake'' Brownian motion, that is, \emph{continuous Markov martingales with Brownian marginals} that miss out only on the strong Markov property. 
	
\smallskip

\noindent \emph{keywords:} 
fake Brownian motion, mimicking processes, Markov property
\end{abstract}

\section{Overview}\label{Introduction}
In this paper, we show that there exist continuous Markov martingales which have the same marginals as Brownian motion but are different from Brownian motion: 


\begin{theorem}
	\label{continuous}
	There is a 1-dimensional Markovian martingale $X$ with continuous paths and Brownian marginals which is not strongly Markovian.
\end{theorem}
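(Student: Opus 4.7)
By Lowther's uniqueness theorem any continuous \emph{strong} Markov martingale with Brownian marginals must coincide with Brownian motion, so any candidate for Theorem~\ref{continuous} must have its Markov property fail at some stopping time. A natural approach is to build $X$ as a random mixture of a measurable family $\{Y^\alpha\}_{\alpha \in A}$ of continuous martingales, each with $N(0,t)$ marginals, governed by a mixing law $\mu$ on the index set $A$. The crucial design requirement is that the label $\alpha$ be recoverable from the state as a measurable function $\alpha = \phi_t(X_t)$ at every deterministic time $t > 0$; this makes $\alpha$ a $\sigma(X_t)$-measurable quantity for every such $t$, so the mixture is Markov at fixed times, while the marginal of $X_t$ is automatically $\int N(0,t)\,d\mu(\alpha) = N(0,t)$ and the martingale/continuity properties pass to $X$ from those of the $Y^\alpha$.

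I would then exhibit a stopping time $\tau$ — for example, the first hitting time of a suitably chosen set $C \subseteq \R$ — designed so that at $\tau$ different labels can produce the same state, i.e.\ the decoding map degenerates along $\tau$. Because the future evolution genuinely depends on $\alpha$, while $\mathcal{F}_\tau$ still contains $\alpha$ through the pre-$\tau$ trajectory, the conditional law of $(X_{\tau+s})_{s\ge 0}$ given $\mathcal{F}_\tau$ fails to be $\sigma(X_\tau)$-measurable, yielding the required failure of the strong Markov property without disturbing Markovianity at deterministic times.

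The technical heart is the construction of the family $\{Y^\alpha\}$ together with the decoding maps $\phi_t$. The demands are sharp: each $Y^\alpha$ must be a continuous martingale whose state at every positive deterministic $t$ lies in the fiber $\phi_t^{-1}(\alpha)$, yet a continuous martingale starting at $0$ cannot be confined to a thin set, so the fibers must carry positive Lebesgue mass and the marginal of $Y^\alpha_t$ must reproduce exactly the Gaussian conditional on $\phi_t^{-1}(\alpha)$ with $\mu$-weight. Producing such a coherent family — plausibly via a self-similar or iterative decomposition of $(0,\infty) \times \R$ into label strata that meet at the origin and at the exceptional set $C$ — and verifying that it admits genuine continuous Markov dynamics with the prescribed marginals is where I expect the principal difficulty to lie, since it is exactly here that Lowther's rigidity theorem must be navigated rather than circumvented.
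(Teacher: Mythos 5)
Your architecture is essentially that of the paper's second construction (Section \ref{GLSection}): a mixture of continuous Markov martingales indexed by a label that is decodable from $X_t$ at every fixed $t>0$, with the failure of the strong Markov property then following from Lowther's uniqueness theorem because $X$ is visibly not a Brownian motion. That high-level logic is sound, and your observation that the fibers must carry positive Lebesgue mass and that each $Y^\alpha_t$ must be distributed as the Gaussian conditioned on its fiber (not $N(0,t)$ as stated in your first paragraph --- that is an internal inconsistency you should fix) is exactly right.

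The genuine gap is that you never construct the family $\{Y^\alpha\}$; you explicitly defer it as ``where I expect the principal difficulty to lie,'' offering only the vague suggestion of a self-similar decomposition. But the existence of a continuous Markov martingale whose time-$t$ marginal is the standard normal conditioned on a prescribed positive-measure set, scaled by $t^{1/2}$, is the entire content of the theorem: without it there is no process and no proof. The paper closes this gap with a single application of \cite[Theorem 1.3]{Lo08b}, which is an existence \emph{and} uniqueness statement --- the very theorem you invoke only for rigidity. Concretely: take a symmetric Borel set $A\subseteq\R$ with $A$ and $\R\setminus A$ both dense and of positive measure, let $U$, $V$ be standard normals conditioned on $A$, $\R\setminus A$ respectively, check that the families $(\text{law of }t^{1/2}U)_{t\ge0}$ and $(\text{law of }t^{1/2}V)_{t\ge0}$ are weakly continuous, have convex (full) support by density of $A$ and its complement, and increase in convex order (decompose each law into symmetric two-point measures $\tfrac12(\delta_x+\delta_{-x})$ and scale); Lowther's theorem then hands you the continuous strong Markov martingales $Y$ and $Z$ whose Bernoulli mixture is the desired $X$. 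Your proposed hitting-time/coupling argument for the failure of the strong Markov property is also unnecessary for this route (though it is how the paper treats its other, ``bare hands'' construction via time-changed Brownian motion with lazy particles); once $X$ is shown to be a continuous Markov martingale with Brownian marginals that differs from $B$ --- e.g.\ because $\mathbb P(X_1\in A,\ X_2\in\sqrt2\,(\R\setminus A))=0$ while this probability is positive for $B$ --- Lowther's uniqueness immediately forbids $X$ from being strongly Markovian.
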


The main part of the paper is devoted to a construction of such a fake Brownian motion $X$ that relies entirely on familiar  techniques of stochastic analysis. While the basic idea is quickly explained (see Section \ref{BareHands}), it will then require some work to fill in the details of this construction. 


Alternatively, we provide second construction in the final Section \ref{GLSection} which is less explicit, but is handled quickly and very easily by a theorem established in \cite{Lo08b}.


\section{Bare hands approach to  faking Brownian motion}\label{BareHands}
Fix a 1-dimensional Brownian motion $B = (B_t)_{0\le t < \infty}$. In this  section we shall assume that the starting value $B_0$ is normally distributed with mean $0$ and variance $1$, in symbols $N(0,1)$, so that $B_t$ is $N(0,1+t)$-distributed. We denote by $p_t(\cdot)$ its density function.


Let $C$ be a Borel subset of the interval $[-1,1]$ and set $G=\mathbb R\setminus C$.
The accumulated amount of time the original Brownian motion $(B_s)_{0 \leq s \leq t}$ spends inside $G$ is given by
\begin{equation}\label{A}
A_t := \int_0^t \mathds 1_{\{B_s \in G\}} \, ds, \qquad 0 \le t < \infty,
\end{equation}
which defines an increasing, Lipschitz-1-continuous process, c.f.\ \cite[Chapter III.21]{RoWi00}.
Its right continuous inverse 
\begin{equation}\label {tau}
\tau_t := \inf \{ s > 0 \colon A_s > t\}, \qquad 0 \le t < \infty,
\end{equation}
defines a time-change of the Brownian motion $B$.
The resulting process $(B_{\tau_t})_{0 \leq t < \infty}$ 
is a strongly Markovian martingale taking values in $G$ almost everywhere.
Consider the case where $G$ is a union of finitely many intervals.
Intuitively speaking, as long as $B_{\tau_t}$ takes values in the interior of one of the above intervals, this process behaves like a Brownian motion.
When it hits the boundary of the interval it is either reflected back into its interior or it jumps to the corresponding boundary of the neighbouring interval.

To imitate Brownian marginals not only on $G$, but also on its compliment $C$, we write $U$ for a uniformly distributed random variable on $[0,1]$ independent of $B$.
With the aid of $U$ we introduce a stopping time $T$
\begin{equation}
	\label{T}
	T := \begin{cases} 0 & B_0 \in G, \\ \inf \left\{ t > 0 \colon U \geq \frac{p_t(B_0)}{p_0(B_0)}  \right\} & B_0 \in C,\end{cases}
\end{equation}
which will allow us in the following to differentiate at each time $0 \leq t < \infty$ between two different species of particles as will be explained in a moment.
Speaking formally, the process of interest $X$ can be defined by virtue of $T$, and is given by
\begin{equation}
	\label{X}
	X_t := \begin{cases} B_{\tau_{t - T}} & t \geq T, \\ B_0 & \text{else.} \end{cases}
\end{equation}
We consider $X$ with respect to its natural (right-continuous, saturated) filtration $(\mathcal F_t)_{0 \leq t < \infty}$.
\begin{proposition} \label{GLprop}
	The process $X$ has the same one-dimensional marginals as the Brownian motion $B$.
	It is a Markov martingale with c\`adl\`ag paths and, if $C$ is closed with empty interior, is continuous.
	\end{proposition}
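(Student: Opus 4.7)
My plan is to verify the four assertions in turn, identifying the Brownian marginal identity as the main technical hurdle (which then feeds into the Markov property). For the marginal statement, I would decompose $\PP(X_t \in A) = \PP(X_t \in A, t < T) + \PP(X_t \in A, t \geq T)$. A direct calculation shows that $s \mapsto p_s(x)/p_0(x) = (1+s)^{-1/2} \exp(x^2 s/(2(1+s)))$ is nonincreasing in $s$ whenever $|x| \leq 1$, so $\PP(T > t \mid B_0 = x) = p_t(x)/p_0(x)$ for $x \in C$, and integrating against $p_0$ gives that the first summand equals $\int_{A \cap C} p_t(x)\,dx$. The second summand is handled by conditioning on $B_0$ and $T$: the contribution from $B_0 \in C$ rewrites as an integral of $\PP(B_{\tau_{t-s}} \in A \mid B_0 = x)$ against the conditional density $-\partial_s p_s(x)/p_0(x)\,ds$ of $T$, while the contribution from $B_0 \in G$ is $\PP(B_{\tau_t} \in A \mid B_0 = x)$. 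Combining these and using both the heat equation $\partial_s p_s = \tfrac12 \partial_x^2 p_s$ and the fact that $B_{\tau_\cdot}$ spends no positive Lebesgue time in $C$, one should identify the total with $\int_{A \cap G} p_t(x)\,dx$; the detailed accounting is the main computational obstacle.

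For the martingale property, $B_{\tau_\cdot}$ is a martingale by optional stopping at the a.s.\ finite $\tau_t$ (with uniform integrability on bounded intervals inherited from $B_t \sim N(0,1+t)$), and $X$ inherits this because it is constant during the waiting phase. C\`adl\`ag paths are immediate from the construction. For continuity under the extra assumption that $C$ is closed with empty interior, I would note that if $B$ stayed in $C$ on a non-trivial interval its range (being connected) would be a non-trivial interval contained in $C$, contradicting empty interior; hence $A$ is strictly increasing, $\tau$ is continuous, and $B_{\tau_\cdot}$ is continuous. Moreover, since $G$ is then open and dense and the preimage of $G$ under continuous $B$ is open and a.s.\ nonempty in every $(0,\varepsilon]$, we get $\tau_0 = 0$ a.s.\ for $B_0 \in C$, so $X_T = B_{\tau_0} = B_0 = X_{T-}$ and no jump occurs at $T$.

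For the Markov property, I would distinguish by the value of $X_t$: if $X_t \in G$, the waiting phase (whose value lies in $C$) must already have terminated, and by the strong Markov property of $B$ at the stopping time $\tau_{t-T}$ combined with the fact that $\tau_{t+s-T} - \tau_{t-T}$ depends only on the restarted Brownian path, the conditional law of $(X_{t+s})_{s \geq 0}$ given $\mathcal F_t$ depends only on $X_t$. If instead $X_t \in C$, the marginal analysis shows that the moving phase contributes no density on $C$, so conditional on $X_t = y \in C$ we are almost surely still waiting; the residual waiting time then satisfies $\PP(T - t > s \mid B_0 = y, T > t) = p_{t+s}(y)/p_t(y)$, depending only on $(t,y)$, and the subsequent evolution is again determined by $y$ via the time-changed Brownian motion restarted from $y$. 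Thus the conditional law of the future given $\mathcal F_t$ depends only on $(t, X_t)$, yielding the Markov property.
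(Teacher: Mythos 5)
Your outline follows the formal flow-balance heuristic of Section \ref{X^N}, but it stops exactly at the point where the proposition actually has to be proved. You write that combining the contributions from $\{B_0\in C\}$ and $\{B_0\in G\}$ with the heat equation ``should identify the total with $\int_{A\cap G}p_t(x)\,dx$; the detailed accounting is the main computational obstacle.'' That accounting \emph{is} the content of the proposition, and it is genuinely delicate: to carry it out along your route you need to control the law of $B_{\tau_{t-s}}$ given $B_0=x$, i.e.\ the forward equation of the time-changed process, whose generator carries non-local boundary conditions of the type $\partial_-f(b_{n-1})=\frac{f(a_n)-f(b_{n-1})}{a_n-b_{n-1}}=\partial_+f(a_n)$ (Lemma \ref{BtauN is Feller}); for a set $C$ such as a fat Cantor set there is no finite list of boundary points at all, and no off-the-shelf regularity theory for the associated densities. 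The paper explicitly declines to argue this way (``to avoid subtle arguments justifying the formal reasoning we go back to the roots'') and instead proves the marginal identity by discretization: a lazy/busy random walk $X^{N,m}$ is shown by induction via the discrete heat equation \eqref{discrete heat equation} to have the marginals of $B^m$ (Theorem \ref{XNm and Bm same 1dim dist}), convergence $X^{N,m}\Rightarrow X^N$ is established on Skorokhod space (Theorem \ref{XNm to XN}, which itself requires Proposition \ref{BmTNm to BTN} and Lemmas \ref{lem:time changed path converges}--\ref{lem:path properties}), and finally $N\to\infty$. None of this machinery is replaced by anything in your proposal, so the marginal claim --- and with it the Markov argument, which you base on the marginal analysis --- is not established.

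Two secondary points. First, your martingale argument invokes optional stopping at $\tau_t$, which is an a.s.\ finite but unbounded stopping time; you assert uniform integrability ``inherited from $B_t\sim N(0,1+t)$'' without proof, whereas the paper obtains the martingale property from $\mathcal W_2$-convergence of the discrete martingale couplings $(X_s^{N,m},X_t^{N,m})$ together with the Markov property. Second, your Markov step uses that $\PP(X_t\in C,\,t\ge T)=0$ for \emph{every} fixed $t$; the occupation identity $\int_0^\infty \mathds 1_C(B_{\tau_s})\,ds=0$ only gives this for almost every $t$, so a further argument (absolute continuity of the law of $B_{\tau_t}$, or the limiting procedure the paper uses) is needed. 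Your continuity argument (strict monotonicity of $A$ from the empty interior of $C$, and $\tau_0=0$ on $\{B_0\in C\}$ so that no jump occurs at $T$) is correct and matches the paper.
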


The formal proof will be given in Section \ref{sec:proofs} below.

We note that $T > 0$ almost surely on $C$, and that the process $X$ is constant on the interval $[0,T]$.
Hence, $X$ is certainly not a Brownian motion so long as $C$ has positive Lebesgue measure.
As an example, if $C$ is a fat Cantor set (see e.g.\ \cite[page 140]{AlBu98}) then it has positive measure yet has empty interior, so that $X$ is a continuous Markov fake Brownian motion.




We know since Bachelier's thesis (compare the twin paper \cite{BePaSc21b} which also contains much of the motivation for the present construction, in particular Section 3) that, at time $t \ge 0$, the {\it net inflow} of $B$ into the interval $[a,\infty[$ at the left end point $a$ equals $-\frac{p'(t,a)}{2}$, i.e.,

\begin{equation}\label{Bachelier}
\frac{\partial}{\partial t} \mathbb P [B_t \ge a] = -\frac{p'(t,a)}{2},
\end{equation}
as follows immediately from the heat equation by integrating with respect to the space variable.

If an interval $[a,b]$ is contained in $]0,\infty[$ we therefore find a positive net inflow at the left boundary $a$ and a negative inflow, i.e. a net outflow, at the right boundary $b$.

In order to construct a fake Brownian motion $X = (X_t)_{0 \le t < \infty}$, let us focus our attention on an interval $[a,b] \subseteq \, ]0,\infty[$ for the moment.  The idea of the present construction is that the process $X$ behaves like a Brownian motion as long as $X_t$ takes its values in the interior of $[a,b]$. When $X$ hits the boundaries we have to make sure that the the amount of net in- resp.~out-flows into the interval $[a,b]$ agree with the values for the original Brownian motion as given by (\ref{Bachelier}). If we can do so, the evolution of the marginal densities of the Brownian motion $B$ and the fake Brownian motion $X$ will coincide on $[a,b]$. We still note that the process $X$ may jump into, resp.~out of, the interval $[a,b]$ at its boundary points, or it may move in or out in a continuous way.

We shall do our construction in two steps. First we consider finitely many disjoint intervals $[a_1^N,b_1^N], \dots,[a_N^N,b_N^N]$ in $]0,1[$ and achieve on each interval the validity of the above program to arrive at a process $X^N$ with the proper marginals on these intervals. This process will jump between the boundary points of neighboring intervals. We shall also have to make sure that on the remaining complement of these intervals the marginals of $X^N_t$ also coincide with the marginals $p_t$ of $B_t$.

In a second step we let $N$ go to infinity and pass to an infinite collection of disjoint intervals $[a_n,b_n]$, contained in $]0,1[$, whose union is dense in $]0,1[$, but has Lebesgue measure strictly less than one. We thus will pass to a limit $X$ of the above processes $X^N$ which will have {\it continuous trajectories} as well as the proper marginals. While the processes $X^N$ will be strongly Markovian martingales,  the limiting process $X$ will still be a Markovian martingale, but fail to have the strong Markov property.

\section{Construction of the Processes \texorpdfstring{$X^N$}{TEXT}} \label{X^N}
Let $[a_n^N,b_n^N ]_{n=1}^N$ be disjoint intervals in $]0,1[$, ordered from left to right.
We also write $]a_0^N, b_0^N] = ]-\infty,0]$ and $[a_{N+1}^N,b_{N+1}^N[ = [1,\infty[$.
Let
\[
	G^N := \bigcup_{n = 0}^{N+1} [a_n^N,b_n^N] \text{ and } C^N := \bigcup_{n = 1}^N ]b_n^N,a_{n+1}^N[,
\]
thus $C^N$ is precisely the complement of $G^N$.

The accumulated amount of time the original Brownian motion $(B_s)_{0 \leq s \leq t}$ spends inside $G^N$ is given by
\begin{equation}\label{A^N}
A^{N}_t := \int_0^t \mathds 1_{\{B_s \in G^N\}} \, ds, \qquad 0 \le t < \infty,
\end{equation}
which defines an increasing, Lipschitz-1-continuous process, c.f.\ \cite[Chapter III.21]{RoWi00}.
Its right continuous inverse 
\begin{equation}\label {tau^N}
\tau^{N}_t := \inf \{ s > 0 \colon A^{N}_s > t\}, \qquad 0 \le t < \infty,
\end{equation}
defines a time-change of the Brownian motion $B$. The resulting process $(B_{\tau_t^N})_{0 \leq t < \infty}$ 
is a strongly Markovian martingale taking values in $G^N$.
Intuitively speaking, as long as $B_{\tau_t^N}$ takes values in the interior of one of the above intervals, this process behaves like a Brownian motion.
When it hits the boundary of the interval it is either reflected back into its interior or it jumps to the corresponding boundary of the neighbouring interval.

To imitate Brownian marginals not only on $G^N$, but also on its compliment $C^N$, we write $U$ for a uniformly distributed random variable on $[0,1]$ independent of $B$. 
With the aid of $U$ we introduce a stopping time $T^N$
\begin{equation}
	\label{TN}
	T^N := \begin{cases} 0 & B_0 \in G^N, \\ \inf \left\{ t > 0 \colon U \geq \frac{p(B_0,t)}{p(B_0,0)}  \right\} & B_0 \in C^N,\end{cases}
\end{equation}
which will allow us in the following to differentiate at each time $0 \leq t < \infty$ between two different species of particles as will be explained in a moment.
Speaking formally, the process of interest $X^N$ can be defined by virtue of $T^N$, and is given by
\begin{equation}
	\label{XN}
	X^N_t := \begin{cases} B_{\tau^N_{t - T^N}} & t \geq T^N, \\ B_0 & \text{else.} \end{cases}
\end{equation}
We consider $X^N$ with respect to its natural (right-continuous, saturated) filtration $(\mathcal F_t)_{0 \leq t < \infty}$.
\begin{proposition} \label{finite}
	The process $X^N$ has the same one-dimensional marginals as the Brownian motion $B$.
	It is a strong Markov martingale with c\`adl\`ag paths, but fails to be continuous.
	\end{proposition}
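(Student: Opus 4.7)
My plan is to verify the four claims -- Brownian marginals, martingale property, strong Markov property, and c\`adl\`ag paths with jumps -- separately, conditioning throughout on whether $B_0$ lies in $G^N$ or in $C^N$.

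The matching of the one-dimensional marginals is the main work. The time-changed Brownian motion $(B_{\tau^N_t})_{t\ge 0}$ is itself a strong Markov martingale taking values in $G^N$: in the interior of each $[a_n^N, b_n^N]$ it moves like Brownian motion, while at a boundary point it either reflects or jumps to the corresponding boundary of the neighbouring interval. A Kolmogorov-forward calculation shows that, starting from $B_0 \sim N(0,1)$, the density of $B_{\tau^N_t}$ on $G^N$ satisfies the heat equation in the interior of each subinterval together with explicit interval-to-interval flux conditions at $\partial G^N$. The only mass this process fails to reproduce is $p_t(\cdot)\,\mathbf 1_{C^N}(\cdot)$, and the role of $T^N$ is precisely to supply it. From (\ref{TN}) one reads $\mathbb P(T^N > t \mid B_0 = x) = p_t(x)/p_0(x)$ for $x \in C^N$, so frozen particles contribute exactly $p_t(x)\,\mathbf 1_{C^N}(x)\,dx$ to the marginal of $X^N_t$. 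It then remains to show that the released particles (those with $T^N \le t$) together with $B_{\tau^N}$ rebuild $p_t(\cdot)\,\mathbf 1_{G^N}(\cdot)$; this reduces to checking that the release rate $-\partial_t p_t(x)\,\mathbf 1_{C^N}(x)$ matches, via the Bachelier flux identity (\ref{Bachelier}), the net inflow that the jump mechanism of $B_{\tau^N}$ brings into $G^N$ at each endpoint of a gap. I expect this flux-matching bookkeeping to be the main obstacle.

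For the martingale property, away from $T^N$ the process $X^N$ inherits it from $B_{\tau^N}$ by optional stopping for time-changed Brownian motion. The only delicate moment is the jump at $T^N$ on $\{B_0 = x \in C^N\}$, where $X^N_{T^N-} = x$ and $X^N_{T^N} = B_{\tau^N_0}$ equals the value of $B$ at its first hit $\sigma$ of $\partial G^N$ started from $x$. Strong Markov for $B$ and optional stopping give $\mathbb E[B_\sigma \mid B_0 = x] = x$, so the jump is mean zero, and the full martingale property in the natural filtration $(\mathcal F_t)$ follows by conditioning on $(B_0,U)$.

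Finally, $X^N$ is a measurable function of $(B,U)$: after time $T^N$ it coincides with the strong Markov process $B_{\tau^N}$, while before $T^N$ it is constant at $B_0 \in C^N$, with the conditional survival relation $\mathbb P(T^N > s+h \mid T^N > s, B_0 = x) = p_{s+h}(x)/p_s(x)$ depending only on the current state and time. Together with the independence of $U$ from $B$, this yields a time-inhomogeneous but strongly Markov evolution. C\`adl\`ag paths follow from right-continuity of $\tau^N$ and continuity of $B$, while discontinuity is immediate: with positive probability $B$ traverses a gap $\,]b_n^N, a_{n+1}^N[\,$ of $C^N$, so $\tau^N$ skips the corresponding time interval and $B_{\tau^N}$ jumps between $b_n^N$ and $a_{n+1}^N$.
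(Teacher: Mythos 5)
Your overall architecture (lazy particles supply $p_t\mathds 1_{C^N}$, released particles plus the time-changed Brownian motion rebuild $p_t\mathds 1_{G^N}$ via flux matching at $\partial G^N$) is exactly the heuristic of Section~\ref{X^N}, but the step you yourself flag as ``the main obstacle'' is a genuine gap, not bookkeeping. To run the Kolmogorov-forward/flux argument you must first establish that $B_{\tau^N_t}$ has a density on $G^N$ that is $C^{2,1}$ in the interior of each $[a_n^N,b_n^N]$, has one-sided spatial derivatives at the endpoints satisfying the transmission conditions $\partial_- u(b_{n-1}^N,t)=\partial_+ u(a_n^N,t)=\bigl(u(a_n^N,t)-u(b_{n-1}^N,t)\bigr)/(a_n^N-b_{n-1}^N)$, carries no atoms at $\partial G^N$ for \emph{every} $t$, and that the resulting coupled system (heat equation in the gaps' complement, transmission conditions, plus the source term $-\partial_t p_t\mathds 1_{C^N}$ redistributed by the harmonic weights \eqref{Z^N jump probabilities}) has a unique solution, which then must be identified with $p_t$. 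None of this is supplied, and it is precisely why the paper abandons this route: it replaces $B$ by the lazy random walk $B^m$ of \eqref{donsker}, proves the marginal identity \emph{exactly} by induction on the time step using the discrete heat equation \eqref{discrete heat equation} and the discrete optional stopping computations \eqref{outflow from C1}--\eqref{flow G2} (Theorem~\ref{XNm and Bm same 1dim dist}), and then transfers the identity to $X^N$ by proving weak convergence $X^{N,m}\Rightarrow X^N$ on $\mathcal D([0,\infty[)$ (Proposition~\ref{BmTNm to BTN}, Lemmas~\ref{lem:time changed path converges} and~\ref{lem:path properties}, Theorem~\ref{XNm to XN}). You need to either carry out the PDE regularity and uniqueness analysis or adopt such a discretization; as written, your marginal argument is the formal sketch, not a proof.

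The remaining parts of your proposal are sound in outline but differ from the paper's route and leave small holes. For the martingale property you invoke optional stopping at $\tau^N_t$, which is an unbounded stopping time, so you owe a uniform integrability argument (it is available: $\tau^N_t-t$ is the time $B$ spends in the bounded set $C^N\subseteq[0,1]$, which has moments of all orders); the paper instead obtains the martingale property as a $2$-Wasserstein limit of the discrete martingale couplings $(X^{N,m}_s,X^{N,m}_t)$. For the strong Markov property the paper goes through the Feller property of $(B_{\tau^N_t})_t$ with an explicit generator and domain (Lemma~\ref{BtauN is Feller}) rather than your direct conditioning on $(B_0,U)$; your version additionally needs the observation that on $\{X^N_\sigma\in C^N\}$ the natural filtration at a stopping time $\sigma$ reveals about $U$ only the event $\{U<p(X^N_\sigma,\sigma)/p(X^N_\sigma,0)\}$. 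Your arguments for c\`adl\`ag paths and for the presence of jumps are correct.
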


The formal proof will be given in Section \ref{sec:proofs} below. Here we only sketch the main ideas.
The verification of the strong Markovianity of $X^N$ is rather straight-forward.
The crucial issue pertains to the marginals of $X^N$.

To verify that the marginals of $X^N$ are indeed Brownian, we distinguish between the behavior of the process $X^N$, whether it takes values in $G^N$ or in $C^N$.
We call particles with $t \geq T^N$ {\it busy particles} as opposed to the {\it lazy particles}, which remain at their initial position in $C^N$ until time $T^N$ and to which we now turn our attention.

The idea is -- as indicated by the word ``lazy'' -- that these particles do not move for some time.
Eventually, namely at time $T^N$, they will change their behaviour from the ``lazy'' state to follow the behaviour of the busy particles.
For $x \in C^N$ the density function $p(x,t)$ of the Brownian motion $(B_t)_{t \geq 0}$ is decreasing in time, and its infinitesimal change is given by the formula
\begin{equation}
	\label{lazy mass change}
	\frac{\partial}{\partial t} p(x,t) = \left( \frac{x^2}{2(t+1)^2} - \frac{1}{2(t+1)}\right) p(x,t) < 0.
\end{equation}
In order to match the evolution of the marginals of the fake Brownian motion $X^N$ with that of the original Brownian motion $B$ on the set $C^N$, we note that by \eqref{lazy mass change} the fraction $\frac{p(x,t)}{p(x,0)}$ is strictly decreasing in time for any fixed $x \in C^N$. Thus,
\begin{equation}
	\label{conditional mass change on CN}
	\mathbb P\left(t \leq T^N \mid B_0 = x \right) = \mathbb P\left( U < \frac{p(x,t)}{p(x,0)} \right) = \frac{p(x,t)}{p(x,0)}.
\end{equation}
As the trajectories of the busy particles (that is $t \geq T^N$) take values exclusively in $G^N$, it is apparent from \eqref{conditional mass change on CN} that the correct amount of (lazy) particles stay at their starting position to match the Brownian marginals on $C^N$.

To further develop our intuition, consider the Brownian motion $B$ conditionally on the starting value $B_0 = x \in \, ]b_{n-1}^N,a_{n}^N[ \subseteq C^N$ for $1 \leq n \leq N + 1$.
In other words, $[a_{n-1}^N,b_{n-1}^N]$ and $[a_n^N,b_n^N]$ are the neighbouring intervals which lie to the
left, resp.\ to the right of the starting point $x$.
As $x \in C^N$, the Brownian motion $B$ with starting value $x$ does not spend any time in $G^N$ before it hits one of
the boundary points $\{b_{n-1}^N,a_n^N\}$, and from this hitting time onwards, it will almost surely spend its time in $G^N$.
As a consequence, the time changed c\`adl\`ag process $(B_{\tau^N_t})_{t \geq 0}$ does not start at $x$ but rather at one of the boundary points $\{b_{n-1}^N,a_{n}^N\}$.
It takes its choice of these two possibilities with probability
\begin{equation}
	\label{Z^N jump probabilities}
	\mathbb P(B_{\tau_0^N} = a_n^N \mid B_0 = x) = \frac{x - b_{n-1}^N}{a_{n}^N - b_{n-1}^N},\quad \mathbb P(B_{\tau_0^N} = b_{n-1}^N \mid B_0 = x) = \frac{a_{n}^N-x}{a_{n}^N - b_{n-1}^N}.
\end{equation}
This causes the process $X^N$ to jump at time $T^N$ from $x$ to the boundary $\{b_{n-1}^N,a_n^N\}$
with the correct probabilities turning $(X^N_{T \wedge t})_{0 \leq t < \infty}$ into a martingale.
After this jump the particle enters into the ``busy'' mode and exhibits the same behaviour as the Markov martingale $(B_{\tau^N_t})_{0 \leq t < \infty}$ taking values in $G^N$.

The analysis of the marginal flow on $G^N$ is more delicate.
We shall use some formal arguemnts in the remainder of this section to reveal the intuition behind our construction.
A rigorous treatment is postponsed to Section \ref{sec:proofs}.
A ``busy'' particle $X^N_t$ takes values in either of the intervals $([a^N_n,b_n^N])_{n = 0}^{N+1}$ which compose $G^N$.
In this case $X^N$ behaves like a Brownian motion inside of $[a^N_n,b_n^N]$ as long as it moves in the interior of one of the intervals up to hitting the boundary.
As mentioned, at this stage the particle can be either reflected back into the interval, or jump into the neighboring interval.
As is well known, the intensity rate of these jumps is proportional to the local time spent by the particle $X_t^N(\omega)$ at the respective boundary points, and indirectly proportional to the distance from the neighbouring interval, i.e.\ to the size of this jump. 
Anticipating that $X^N$ has at time $t$ the correct marginal distribution, this observation leads to
\begin{align}
	\frac1{dt}\mathbb P\left( X_{t + dt} = a_n^N, X_t \in [a_{n-1}^N, b_{n - 1}^N] \right) = \frac{p(b_{n-1}^N,t)}{2(a_n^N - b_{n - 1}^N)}, \label{1}
\\	\frac1{dt}\mathbb P\left( X_{t + dt} = b_{n-1}^N, X_t \in [a_{n}^N,b_n^N] \right) = \frac{p(a_n^N,t)}{2(a_n^N - b_{n - 1}^N)}, \label{2}
\end{align}
where \eqref{1} describes an inflow at $a_n^N$ from the neighbouring interval $[a_{n-1}^N,b_{n-1}^N]$ whereas \eqref{2} describes an outflow from $[a_n^N,b_n^N]$ to  $[a_{n-1}^N,b_{n-1}^N]$.

Simultaneously the boundary points in $G^N$ experience an additional mass inflow caused by ``lazy'' particles switching to the ``busy'' regime at time $T^N$.
Thanks to \eqref{Z^N jump probabilities} we can explicitly derive the inflow at $a_{n+1}^N$ caused by particles changing their behavior from the ``lazy'' to the ``busy'' mode.
The heat equation and an integration by parts yield
\begin{align} \nonumber
	\frac1{dt}\mathbb P\left( X_{t+dt}^N = a_n^N, X_t^N \in (b_{n-1}^N,a_n^N) \right) &= -\int_{b_{n-1}^N}^{a_n^N} \frac{x - b_{n-1}^N}{a_n^N - b_{n-1}^N} \frac{\partial}{\partial t} p(x,t) \, dx
\\	&= -\frac{1}{2} \int_{b_{n-1}^N}^{a_n^N} \frac{x - b_{n-1}^N}{a_n^N - b_{n-1}^N} \frac{\partial^2}{\partial x^2} p(x,t) \, dx \nonumber
\\	&= \frac{1}{2} \left( -\frac{\partial}{\partial x} p(a_n^N,t) + \frac{p(a_n^N,t) - p(b_{n-1}^N,t)}{a_n^N - b_{n-1}^N}\right). \label{3}
\end{align}
Now we arrive at the crucial point of the construction: adding the effect of \eqref{3} to the in \eqref{1} and \eqref{2} calculated in- and out-flows of the ``busy'' particles we arrive {\it precisely} at the net inflow of the original Brownian motion $B$ at the boundary point $a_n^N$.
Of course, a similar argument applies to the right boundary point $b_n^N$ as well to all the other boundary points.
In conclusion, the marginals of $X^N_t$ equal the marginals of $B_t$ on $C^N$ as well as on $G^N$, i.e.\ on all of $\R$. 
This finishes the intuitive sketch of the ideas underlying the proof of Proposition \ref{finite} indicating that we have successfully constructed a fake Brownian motion with the properties detailed in Proposition \ref{finite}.
In Section \ref{sec:proofs} we shall translate this intuition into rigorous mathematics.

\section{Construction of the Process \texorpdfstring{$X$}{TEXT}} \label{sec:continuous}

The above sequence $(X^N)_{N=1}^\infty$ of processes will allow us to pass to a limiting {\it continuous} process $X$ which will be our desired fake Brownian motion.

As already mentioned in the introduction, we choose an infinite collection of closed disjoint intervals $([a_n,b_n])_{1 \le n < \infty}$  in $[0,1]$, whose union is dense in $[0,1]$ but has Lebesgue measure strictly less than one.
Taking the first $N$ intervals, ordering them from left to right, and adding the intervals $]a_0^N, b_0^N]=\, ]-\infty,0]$ and $[a_{N+1}^N,b_{N+1}^N[\, = [1,\infty[$, we are in the situation of the previous section to obtain a process $X^N$.
We denote by $G = \bigcup_{N=1}^\infty G^N$ the union of all these intervals, and the complement of $G$ by $C = \bigcap_{N = 1}^\infty C^N$.
Recall the definitions \eqref{A^N} and \eqref{tau^N}
\begin{equation}\label {tau^N,2}
A^{N}_t := \int_0^t \mathds 1_{G^N}(B_s(\omega)) \, ds,  \qquad \tau^{N}_t := \inf \{ s > 0 \colon A^{N}_s > t\}.
\end{equation}
Clearly the trajectories $(A^N_t(\omega))_{0 \le t < \infty}$ of the process $A^N$ are increasing, Lipschitz-1 continuous, and increase almost surely pointwise to the process $A$ given by
\begin{equation}\label {tau^N,3}
A_t := \int_0^t \mathds 1_{G}(B_s) \, ds, \qquad \tau_t := \inf\left\{ s > 0 \colon A_s > t \right\}.
\end{equation}
As $G$ is a union of intervals which is dense in $\R$, the original Brownian motion $B$ spends almost surely a positive amount of time during any time window of the form $[t_1,t_2]$, $0 \leq t_1 < t_2 < \infty$, in $G$.
For this reason $A$ is almost surely strictly increasing, and $(\tau_t)_{0 \leq t < \infty}$ has almost surely continuous paths.
The trajectories $(\tau^N_t(\omega))_{0 \le t < \infty}$ of the process $\tau^N$ decrease almost surely to the continuous trajectories $\left(\tau_t(\omega)\right)_{0 \le t < \infty}$, uniformly on compact subsets of $[0,\infty[$.

The stopping times $(T^N)_{N \in \N}$ previously defined in \eqref{TN} converge pointwise to $T$, which is given by
\begin{equation}
	\label{T}
	T := \begin{cases}0 & B_0 \in G, \\ \inf\left\{ t > 0 \colon U \geq \frac{p(B_0,t)}{p(B_0,0)}\right\} & B_0 \in C. \end{cases}
\end{equation}
In conclusion, the limiting process $X$
\begin{equation} \label{X}
	X_t := B_{(\tau_t - T) \wedge 0} = \begin{cases} B_{\tau_{t - T}} & t \geq T, \\ B_0 & \text{else,}\end{cases}
\end{equation}
has almost surely continuous trajectories and is almost surely the limit of the sequence $(X^N)_{N \in \N}$.
We shall verify that it is the desired fake Brownian motion described by Theorem \ref{continuous}. 

The proof of Theorem \ref{continuous} is given in Section \ref{sec:continuous} below.
From an intuitive point of view Theorem \ref{continuous} is a rather obvious consequence of Proposition \ref{finite}.
The novel aspect is the failure of the strong Markov property.
This failure follows from a general theorem of Lowther \cite[Theorem 1.3]{Lo08b}: an $\R$-valued, continuous, and strongly Markovian martingale is uniquely determined by its one-dimensional marginals.
In particular, if these marginals are those of a Brownian motion $B$, there is no other process $X$ with the mentioned properties.
In other words, there is no fake Brownian motion which is a continuous, {\it strong} Markov martingale.

\begin{corollary}
	The fake Brownian motion $X$ fails to have the strong Markov property.
\end{corollary}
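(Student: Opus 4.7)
The plan is to derive the corollary as a direct consequence of Lowther's uniqueness theorem quoted in the excerpt. Concretely, I would argue by contradiction: suppose $X$ were strongly Markovian. Then $X$ would be a continuous, real-valued, strongly Markovian martingale with the same one-dimensional marginals as Brownian motion $B$. Lowther's theorem \cite[Theorem 1.3]{Lo08b} asserts that such a process is uniquely determined in law by its marginals, so $X$ would have the same law as $B$ as a process.

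The second step is to produce a distributional feature that distinguishes $X$ from $B$. The natural candidate is the set $\{B_0 \in C\}$. Since $C$ is chosen to have strictly positive Lebesgue measure, and $B_0$ has an everywhere positive $N(0,1)$ density, this event has strictly positive probability. On this event, the definition \eqref{T} gives $T > 0$ almost surely (because $p(B_0,\cdot)/p(B_0,0)$ is continuous and equals $1$ at $t=0$, while $U < 1$ a.s.), and by the construction \eqref{X} the process $X$ is constant equal to $B_0$ on the interval $[0,T]$. Hence with positive probability $X$ has a nontrivial initial constancy interval, a property that Brownian motion fails almost surely: for $B$, the random time $\inf\{t>0 : B_t \ne B_0\}$ equals $0$ almost surely.

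Combining these two steps: if $X$ were strongly Markov, Lowther's theorem would force $\mathrm{Law}(X) = \mathrm{Law}(B)$, contradicting the nontrivial constancy observed above. Therefore $X$ cannot be strongly Markovian, which is the content of the corollary. I do not foresee a genuine obstacle here; the only point that requires a brief sanity check is that the continuity and Markov-martingale properties of $X$, already granted by Proposition \ref{GLprop} (equivalently the limiting construction of Section \ref{sec:continuous}), put $X$ squarely within the scope of Lowther's uniqueness result, and that the distinguishing event $\{B_0 \in C,\, T>0\}$ really has positive probability, which follows immediately from $\mathrm{Leb}(C) > 0$ and the definition of $T$.
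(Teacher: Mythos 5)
Your proposal is correct and follows the same route as the paper: the corollary is deduced from Lowther's uniqueness theorem for continuous strong Markov martingales, combined with the observation that $X$ is constant on the nontrivial initial interval $[0,T]$ with positive probability (since $C$ has positive Lebesgue measure), so $X$ cannot have the law of $B$. The paper additionally sketches a direct coupling argument to exhibit the failure of the strong Markov property without invoking Lowther's theorem, but that is presented as supplementary intuition rather than as the proof of the corollary.
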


It is instructive to directly visualize the failure of the strong Markov property of the above constructed process $X$, without recurse to Lowther's theorem. We do so by applying the technique of {\it coupling} as in \cite{Ho98c}. 

Let $\tilde X$ be an independent copy of the process $X$ given by \eqref{X}, and let $\tilde T$ be the corresponding independent copy of $T$, see \eqref{T}, which indicates the time when $\tilde X$ changes from ``lazy'' mode to the ``busy'' mode.
Both are defined on the same probability space which we equip with the filtration $({\mathcal{F}_t})_{0 \le t < \infty}$, generated by these two processes, whose elements we write as $(\omega, \tilde \omega)$.
Define the stopping time $\tau = \tau(\omega, \tilde \omega)$ as the first moment when the trajectories $X(\omega)$ and $\tilde X(\tilde \omega)$ meet, that is
\begin{equation}
	\label{tau}
	\tau(\omega,\tilde \omega) := \inf\left\{ t > 0 \colon X_t(\omega) = \tilde X_t(\tilde \omega) \right\}.
\end{equation}

On the event $\{T(\omega) \leq \tau(\omega,\tilde \omega)\}$ we know that $X$ is in the ``busy'' regime from time $\tau$ onwards.
That means that $(X_{\tau + t})_{0 \leq t < \infty}$ behaves like the strong Markov process $(B_{\tau_t})_{0 \leq t < \infty}$ when the underlying Brownian motion has the correct starting distribution $B_0 \sim X_{\tau}$.
Hence we have, almost surely for any $t \in [0,\infty[$ on $\{T(\omega) \leq \tau(\omega,\tilde \omega)\}$, that
\begin{equation} \label{strong Markov when busy}
	\mathbb E\left[ \mathds 1_{G}(X_\tau + t) \mid \mathcal F_{\tau} \right] = \mathbb E\left[ \mathds 1_{G}(B_{\tau_t}) \mid B_0 = X_\tau \right].
\end{equation}
The sets $\mathcal A, \mathcal B \in \mathcal{F}_\tau$ where the particle $X_\tau(\omega)$ (resp.\ $\tilde X_\tau(\tilde \omega)$) at time $\tau$ is in the ``busy'' mode while  ${\tilde X}_\tau(\tilde \omega)$ (resp.\ $X_\tau(\omega)$) is in the ``lazy'' mode, in symbols
\begin{align}
	\label{calA}
	\mathcal A :=& \left\{(\omega,\tilde \omega) \colon T(\omega) \leq \tau(\omega,\tilde \omega) < \tilde T(\tilde \omega) \right\},
\\	\label{calB}
	\mathcal B:=& \left\{ (\omega,\tilde \omega) \colon \tilde T(\tilde \omega) \leq \tau(\omega,\tilde \omega) < T(\omega) \right\}.
\end{align}
have positive probability and $\mathbb P\left( \mathcal A \right) = \mathbb P \left( \mathcal B \right)$.
Define the conditional probabilities $\mathbb P_{\mathcal A}(\cdot \cap \mathcal A)$ and $\mathbb P_{\mathcal B}(\cdot \cap \mathcal B)$, and particularly due to symmetry
\begin{equation}
	\label{distribution at time tau}
	\mu(dx,dt) := \mathbb P_\mathcal A\left( X_\tau \in dx, \tau \in dt\right) = \mathbb P_\mathcal B\left( X_\tau \in dx, \tau \in dt\right).
\end{equation}

The conditional probability of $X_{\tau + t}$ with respect to $\mathcal{F}_\tau$ does not only depend on the present position ${X}_\tau(\omega)) = {\tilde X}_\tau(\tilde \omega) $ but also on the information whether ${X}_\tau(\omega) $ is in the ``busy'' or ``lazy'' mode.
Indeed, fix $t > 0$ and consider the probability of the event $\{ X_{\tau + t} \in G \}$ conditionally on $\mathcal A$, which is, by \eqref{strong Markov when busy} and as $\tau^N_t \searrow \tau_t$ almost surely, given by
\begin{align*}
	\mathbb P_\mathcal A\left( X_{\tau + t} \in G \right) &= \mathbb E_{\mathbb P_\mathcal A} \left[ \mathbb P\left( B_{\tau_t} \in G \mid \mathcal F_\tau \right) \right] = \mathbb E_{\mathbb P_\mathcal A} \left[ \mathbb E \left[ \mathds 1_G (B_{\tau_t}) \mid B_0 = X_\tau \right] \right]
\\	&\geq \mathbb E_{\mathbb P_\mathcal A} \left[ \mathbb E \left[ \mathds 1_{\text{int}(G)}(B_{\tau_t}) \mid B_0 = X_\tau \right] \right] 
\\ &= \lim_{N \to \infty} \mathbb E_{\mathbb P_\mathcal A}\left[ \mathbb P\left( B_{\tau_t^N} \in \text{int}(G) \mid B_{0} = X_\tau \right) \right] = 1,
\end{align*}
where we write $\text{int}(G)$ for the interior of $G$.
We deduce that
\begin{equation}
	\label{cond prob1}
	\mathbb P_\mathcal A\left( X_{\tau + t} \in C \mid X_\tau, \tau\right) = 0, \quad \text{thus,} \quad \mathds 1_{\mathcal A} \mathbb P\left( X_{\tau + t} \in C \mid \mathcal F_\tau \right) = 0,
\end{equation}
whereas,
\begin{equation}
	\label{cond prob2}
	\mathbb P_\mathcal B\left( X_{\tau + t} \in C \mid X_\tau, \tau \right)>0,
\end{equation}
thus $\mathds 1_{\mathcal B}\mathbb P\left( X_{\tau + t} \in C \mid \mathcal F_\tau\right)$ does not vanish.
From \eqref{distribution at time tau}, \eqref{cond prob1}, and \eqref{cond prob2}, we follow that in order to determine the conditional probability of $\{X_{\tau + t} \in C\}$ given $\mathcal F_\tau$ we require the information from the past of the process $X$ prior to the stopping time $\tau$.
In conclusion, the process $X$ fails to have the strong Markov property.

Intuitively speaking, this failure stems from the fact that the ``busy'' particles travel through the ``lazy territory'' $C$ in a continuous way. In contrast, the processes $X^N$ considered in the previous section jump over the territory $C^N$ which makes it impossible to ``catch'' them while they are traveling through $C^N$  by a stopping time $\tau$.

\section{Proofs} \label{sec:proofs}

\begin{lemma} \label{BtauN is Feller}
	Using the notation of Section \ref{X^N}, let $B$ be a Brownian motion started at some $B_0 = x \in G^N$.
	Then the time-changed Brownian motion $(B_{\tau^N_t})_{0 \leq t < \infty}$ is a Feller process.
	Its Feller generator $\mathcal G$ is given by
	\begin{equation} \label{generator}
		\mathcal Gf(x) = \frac{1}{2} \frac{\partial^2}{\partial x^2} f(x)
	\end{equation}
	and its domain by
	\begin{equation} \label{domain}
		\left\{f \in C^2(G^N) \colon \partial_- f(b_{n-1}^N) = \frac{f(a_n^N) - f(b_{n-1}^N)}{a_n^N - b_{n-1}^N}= \partial_+ f(a_n^N), \text{ for every }n = 1,\ldots,N+1\right\}.
	\end{equation}
\end{lemma}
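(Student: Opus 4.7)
The plan is to verify separately the three constitutive properties: strong Markovianity of $(B_{\tau^N_t})_{0 \leq t < \infty}$, the Feller property of its semigroup $P_t$, and identification of the generator on the stated domain. Strong Markovianity is an instance of the classical theorem that a time change of a strong Markov process by a continuous adapted additive functional is again strong Markov. The additive functional in question is $A^N_t = \int_0^t \mathds 1_{G^N}(B_s)\,ds$, and its right-continuous inverse $\tau^N$ furnishes the time change; compare Chapter III.21 of \cite{RoWi00}.

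The generator is identified via an Itô--Tanaka calculation. Given $f \in C^2(G^N)$ satisfying the matching condition
\[
\partial_- f(b_{n-1}^N) = \frac{f(a_n^N) - f(b_{n-1}^N)}{a_n^N - b_{n-1}^N} = \partial_+ f(a_n^N),
\]
I extend $f$ to $\tilde f \colon \mathbb R \to \mathbb R$ by linearly interpolating on each gap $\,]b_{n-1}^N, a_n^N[\,$. The matching condition is exactly what forces $\tilde f \in C^1(\mathbb R)$, and its distributional second derivative $\tilde f''$ is a locally bounded function equal to $f''$ on $G^N$ and to $0$ on $C^N$ (no singular contribution at the boundary points, because $\tilde f'$ is continuous across them). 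The Itô--Tanaka formula applied to $\tilde f(B_t)$ therefore gives
\[
\tilde f(B_t) - \tilde f(B_0) - \tfrac12\int_0^t \tilde f''(B_s)\,ds = \int_0^t \tilde f'(B_s)\,dB_s,
\]
which is a local martingale. Performing the substitution $t \leftarrow \tau^N_t$, using the change-of-variables identity $A^N_{\tau^N_u} = u$ together with $\tilde f''\mathds 1_{G^N} = f''$, and noting that $B_{\tau^N_s}$ lies in $G^N$ where $\tilde f = f$, yields that
\[
f(B_{\tau^N_t}) - f(B_0) - \tfrac12 \int_0^t f''(B_{\tau^N_s})\,ds
\]
is a local martingale. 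This identifies $\mathcal G f = \tfrac12 f''$ on the stated set.

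For the Feller property of $P_t f(x) := \mathbb E_x[f(B_{\tau^N_t})]$ on $C_0(G^N)$, continuity of $x \mapsto P_t f(x)$ is obtained by coupling: Brownian motions started at nearby points can be realised on a common probability space with uniformly close paths, and $\tau^N$ depends continuously on the underlying path on compact time intervals. Vanishing at infinity follows from the fact that for $|x|$ large, $B_{\tau^N_t}$ stays far from the support of $f$ with high probability. Strong continuity $\|P_tf - f\|_\infty \to 0$ as $t \to 0$ follows from right-continuity of $B_{\tau^N}$ at $0$ combined with uniform continuity of $f$. To promote mere containment of the stated set in $\mathrm{Dom}(\mathcal G)$ to equality, I would invoke a standard Hille--Yosida core argument: the stated set is dense in $C_0(G^N)$ and $(\lambda - \mathcal G)$ has dense range on it, which forces equality of domains.

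The main obstacle is verifying continuity of $x \mapsto P_t f(x)$ at a boundary point such as $b_{n-1}^N$ when approached from the interior of the adjacent interval $[a_{n-1}^N, b_{n-1}^N]$. For $x \nearrow b_{n-1}^N$, the Brownian motion from $x$ reaches $b_{n-1}^N$ in time $O((b_{n-1}^N - x)^2)$ and by strong Markovianity of $B$ continues thereafter exactly as a Brownian motion started at $b_{n-1}^N$. What remains is a tightness argument verifying that this short initial delay does not affect the limiting law of $B_{\tau^N_t}$, and in particular that the ``instantaneous jump to $a_n^N$'' component of the boundary-started time-changed process is correctly recovered in the limit.
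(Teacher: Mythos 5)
Your Itô--Tanaka identification of the generator is a genuinely different (and attractive) route: the paper instead computes the characteristic operator via Dynkin's formula, using expected occupation/exit times near interior and boundary points. But your version only yields one inclusion, namely that the stated set lies in $\mathrm{Dom}(\mathcal G)$, and the step you propose for the reverse inclusion is not sound as written: density of $(\lambda-\mathcal G)(D_0)$ in $C_0(G^N)$ does \emph{not} force $D_0=\mathrm{Dom}(\mathcal G)$ (any graph-norm-dense proper subspace of the domain already has dense range under $\lambda-\mathcal G$). You would need either to show that $(\lambda-\mathcal G)(D_0)$ is \emph{all} of $C_0(G^N)$ --- i.e.\ actually solve $\lambda f-\tfrac12 f''=g$ on each component interval subject to the gluing conditions --- or to verify that $D_0$ is closed in the graph norm. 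The paper gets the reverse inclusion for free from the Dynkin computation: at a boundary point $a_n^N$ the defining limit of $\mathcal Gf(a_n^N)$ \emph{cannot exist} unless $l f'(a_n^N)=f(a_n^N)-f(b_{n-1}^N)$ with $l=a_n^N-b_{n-1}^N$, so membership in the domain forces the matching condition.

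The second gap is one you flag yourself: continuity of $x\mapsto P_tf(x)$ at the boundary points of $G^N$ is left as ``what remains is a tightness argument,'' so the Feller property is not actually established. The paper avoids this semigroup-level difficulty entirely by working with the resolvent. The strong Markov property applied at the hitting time $\tilde H_y=\inf\{t>0\colon B_{\tau^N_t}=y\}$ gives
$|R_\lambda f(x)-R_\lambda f(y)|\le \tfrac{2\lVert f\rVert}{\lambda}\,\mathbb E\bigl[1-e^{-\lambda \tilde H_y}\mid B_0=x\bigr]$,
and since $\tilde H_y=A^N_{H_y}\le H_y\to 0$ in probability as $y\to x$ within a component interval, $R_\lambda$ maps $C_0(G^N)$ into itself; together with $\lambda R_\lambda f\to f$ pointwise (via Blumenthal's 0--1 law, which guarantees $\tau^N_t\to 0$ as $t\searrow 0$) and the Hille--Yosida machinery, this yields the Feller property without ever confronting the boundary behaviour of $P_t$ directly. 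I recommend replacing your coupling argument by this resolvent estimate; as it stands, your proposal does not close the gap it identifies.
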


\begin{proof}
	To see that $(B_{\tau_t^N})_{0 \leq t < \infty}$ defines a Feller process, we recall that it inherits the strong Markov property from $B$.
	Define its resolvent
	\[
		R_\lambda f(x) := \int_0^\infty e^{-\lambda t} \mathbb E\left[f(B_{\tau^N_t}) \mid B_0 = x\right] \,dt.
	\]
	By the Hille-Yosida theorem, it suffices to show that $(R_\lambda)_{0 < \lambda < \infty}$
	is a strongly continuous contraction resolvent on $C_0(G^N)$ (the set of continuous functions on $G^N$ vanishing at $\pm \infty$).
	Recall the definition of the process $(A_t^N)_{0 \leq t < \infty}$, see \eqref{A^N}.
	Conditionally on the starting point $B_0 = x \in G^N$, we have as a consequence of Blumenthal's 0-1 law that almost surely $\forall t > 0$, $A_t^N > A_0^N$.
	Therefore, as $t \mapsto A_t^N$ is increasing and continuous (particularly at $0$), we find
	\[
		\lim_{t \searrow 0}\tau^{N}_t = \lim_{t \searrow 0} A^N_t = 0.
	\]
	Thus, for any $x \in G^N$
	\[
		\lim_{t \searrow 0}\mathbb E\left[f(B_{\tau^N_t}) \mid B_0 = x\right] = f(x) \text{ and }\lim_{\lambda \to \infty} \lambda R_\lambda f(x) = f(x).
	\]
	Due to \cite[Lemma 6.7 and its proof]{RoWi00}, it remains to show that $R_\lambda$ maps $C_0(G^N)$ into itself.
	Let $x,y \in [a_n^N,b_n^N] \subseteq G^N$, then we obtain as in \cite[Equation (22.2)]{RoWi00}
	\begin{equation}
		\label{continuity of resolvent}
		|R_\lambda f(x) - R_\lambda f(y)| \leq \frac{2\lVert f\rVert}{\lambda} \mathbb E\left[1 - e^{-\lambda \tilde H_y} \mid B_0 = x \right],
	\end{equation}		
	where $\tilde H_y := \inf\{ t > 0 \colon B_{\tau_t^N} = y\}$ is the first hitting time of $y$ and $\lVert \cdot \rVert$ denotes the supremum norm on $C_0(G^N)$.
	Since $\tilde H_y$ is dominated by $H_y := \inf\{t > 0 \colon B_t = y\}$, we have that for $\epsilon > 0$
	\[
		\lim_{y \to x}\mathbb P\left( \tilde H_y \geq \epsilon \mid B_0 = x \right) = 0,
	\]
	whence, the right-hand side in \eqref{continuity of resolvent} vanishes.
	We have shown that the time-changed process $(B_{\tau_t^N})_{ 0 \leq t < \infty}$ is a Feller process.

	Next, we compute its generator. 
	Let $x \in \, ]a_n^N,b_n^N[$ and recall that the expected time the Brownian motion $B$ started at $B_0 = x$ spends inside $[x-\epsilon, x + \epsilon] \subseteq [a_n^N,b_n^N]$ before exiting $[x-\epsilon, x + \epsilon]$ is given by 
	\[ \mathbb E\left[ H_{x - \epsilon} \wedge H_{x + \epsilon} \mid B_0 = x \right] = \epsilon^2. \]
	Due to Dynkin's formula we can compute the generator via
	\begin{align*}
		\mathcal Gf(x) &= \lim_{\epsilon \searrow 0} \frac{\frac{f(x + \epsilon) + f(x - \epsilon)}{2} - f(x)}{\mathbb E\left[H_{x -\epsilon} \wedge H_{x + \epsilon} \mid B_0 = x \right]}
	\\	&= \lim_{\epsilon \searrow 0}\frac{f(x + \epsilon) - f(x) + f(x - \epsilon) - f(x)}{2\epsilon^2}
	\\	&= \lim_{\epsilon \searrow 0}\frac{f'(x + \epsilon) - f'(x - \epsilon)}{4\epsilon} = \frac{f''(x)}{2},
	\end{align*}
	and note that $\mathcal Gf(x)$ can only exist if $f$ is two times continuously differentiable at $x$.

	Next let $x = a_n^N$, $l = a_n^N - b_{n-1}^N$ for some $n = 1,\ldots, N+1$.
	The expected time a Brownian motion $B$ started at $B_0 = 0$ spends inside $[0,\epsilon]$ before $H_\epsilon$ is $\epsilon^2$.
	Therefore, we can compute the expected time the Brownian motion $B$ started at $B_0 = x$ spends inside $[x,x+\epsilon] \subseteq [x = a_n^N,b_n^N]$ before hitting either of $\{b_{n-1}^N,x + \epsilon\}$ for the first time:
	\begin{align*}
		\epsilon^2 &= \mathbb E\left[ \int_0^{H_{x + \epsilon}} \mathds 1_{[x,x+\epsilon]}(B_s) \, ds \mid B_0 = x \right]
		\\	&= \mathbb E\left[\int_0^{H_{x + \epsilon} \wedge H_{b_{n-1}^N}} \mathds 1_{[x,x+\epsilon]}(B_s) \, ds \mid B_0 = x \right]	+ \mathbb P\left(H_{b_{n-1}^N} < H_{x + \epsilon} \mid B_0 = x\right) \epsilon^2
		\\	&= \mathbb E\left[ \tilde H_{b_{n-1}^N} \wedge \tilde H_{x + \epsilon}\right] + \frac{\epsilon^3}{l + \epsilon}.
	\end{align*}
	Again, Dynkin's formula allows us to compute the generator at $x$
	\begin{align*}
		\mathcal Gf(x) &= \lim_{\epsilon \searrow 0} \frac{\mathbb E\left[ f(B_{\tau^N(\tilde H_{b_{n - 1}^N} \wedge \tilde H_{x + \epsilon})}) \mid B_0 = x\right] - f(x)}{\mathbb E\left[ \tilde H_{b_{n - 1}^N} \wedge \tilde H_{x + \epsilon} \mid B_0 = 0 \right]}
	\\	&= \lim_{\epsilon \searrow 0} \frac{ \frac{l}{l + \epsilon} \left( f(x + \epsilon) - f(x) \right) +\frac{\epsilon}{l + \epsilon} (f(b_{n-1}^N) - f(x))}{ \epsilon^2 ( 1 - \frac{\epsilon}{b + \epsilon})}.
	\end{align*}
	This limit can only exist if $f'(x)$, $f''(x)$ exist (where $f'(x)$ and $f''(x)$ denote here the adequate one-sided derivatives) and $lf'(x) = f(x) - f(b_{n-1}^N)$.
	In this case, we have by de l'Hopital's rule
	\begin{align*}
		\mathcal Gf(x) &= \lim_{\epsilon \searrow 0} \frac{ \frac{l}{l+\epsilon} f'(x+\epsilon) + \frac{f(b_{n-1}^N) - f(x)}{l + \epsilon} - \frac{1}{(l+\epsilon)^2}\left( \epsilon (f(b_{n-1}^N  - f(x)) + l(f(x+\epsilon) - f(x)) \right) }{2\epsilon}
	\\	&= \frac{ f''(x) - \frac{f'(x)}{l} - \frac{2}{l^2}(f(b_{n-1}^N) - f(x)) - \frac{f'(x)}{l}  }{2} = \frac{f''(x)}{2}.
	\end{align*}
	We conclude by remarking that by analogous reasoning we have for $b_n^N$ and some $n = 0,\ldots,N$ that $f$
	can only be in the domain of $\mathcal G$ if and only if it is two times (one-sided) differentiable at $b_n^N$ and
	\begin{align*}
		\mathcal Gf(b_n^N) = \frac{f''(b_n^N)}{2},\quad f'(b_n^N) = \frac{f(a_{n+1}^N) - f(b_n^N))}{a_{n+1}^N - b_n^N},
	\end{align*}
	where $f'(b_n^N)$ and $f''(b_n^N)$ are again the adequate one-sided derivatives.
\end{proof}


\subsection{Marginals} \label{sec:marginals}

This section is concerned with the verification that $X^N$ has the correct Brownian 1-dimensional marginals.
On a formal level one may argue as follows:
We have shown in Lemma \ref{BtauN is Feller} that the busy particles $(B_{\tau^N_t})_{0 \leq t < \infty}$ behave like a Feller process with generator $\mathcal G$, c.f.\ \eqref{generator} and \eqref{domain}.
As we know the exact form of the generator, it is possible to derive the Kolmogorov forward equation describing the time evolution of the density $u$ of the process $(B_{\tau^N_t})_{0 \leq t < \infty}$, that is,
\begin{gather*}
	\frac{\partial}{\partial t} u(x,t) = \frac{\partial^2}{\partial x^2} u(x,t) \quad (x,t) \in (G^N \setminus \partial G^N)\times (0,\infty[,
\\	\frac{\partial}{\partial x} u(a_n^N,t) = \frac{u(a_n^N,t) - u(b_{n-1}^N)}{a_n^N - b_{n-1}^N} = \frac{\partial}{\partial x} u(b_{n-1}^N,t)\quad t \in (0,\infty[,
\end{gather*}
when $n = 1,\ldots,N$.
Thus, relying on knowledge of the corresponding heat equations one can show that the inflow of particles from $C^N$ caused by changing their modes from ``lazy'' to ``busy'' yield at the boundary $\partial G^N$ the correct compensation, whence, the density $v$ of $X^N$ satisfies the heat equation
\[
	\frac{\partial}{\partial t} v(x,t) = \frac{1}{2} v(x,t)\quad (x,t) \in \R \times (0,\infty[,	
\]
with initial condition $v(x,0) = p(x,0)$.

Laying regularity questions aside, the above sketched approach seems rather clear-cut.
Nevertheless, to avoid subtle arguments justifying the formal reasoning we ``go back to the roots'' and use a discretization argument instead.
Approximating $B$ by a scaled random walk $B^m$ allows us to establish the form of the marginals of $X^N$ without having to worry about regularity of the involved densities.

To this end, consider a random walk on $\Z$ with i.i.d.\ increments $(\zeta_k)_{k \in \N}$ where
\begin{equation}
	\label{discrete increments}
	\mathbb P\left( \zeta_k = \pm 1 \right) = \frac{1}{2} \text{ and }\mathbb P \left( \zeta_k = 0 \right) = \frac{1}{2},\quad k \in \N,
\end{equation}
so that for any $j \in \Z$ with $-l \leq j \leq l \in \N$
\begin{equation}
	\label{discrete normal distribution}
	\tilde p_l(j) := \mathbb P\left( \sum_{k = 1}^l \zeta_k = j \right) = 2^{-2l} \binom{2l}{l+j},
\end{equation}
which satisfies the discrete heat equation
\begin{equation}
	\label{discrete heat equation}
	\tilde p_{l+1}(j) - \tilde p_l (j) = \frac{1}{4} \tilde p_l (j-1) + \frac{1}{4} \tilde p_l (j+1) - \frac{1}{2} \tilde p_l (j).
\end{equation}

From Donsker's theorem we know that the scaled random walk
\begin{equation}
	\label{donsker}
	B_t^m := \sqrt{\frac{2}{m}} \sum_{k = 1}^{\lfloor m(1 + t) \rfloor} \zeta_k, \quad t\in [0,\infty[,
\end{equation}
converges in law to the original Brownian motion $B$ as random variables on the Skorokhod space $\mathcal D([0,\infty[)$.
Recall that $B_0$ is normally distributed with mean $0$ and variance $1$, for which reason the sum in \eqref{donsker} runs from $1$ to ${\lfloor m(1 + t) \rfloor}$ rather than ${\lfloor mt \rfloor}$.
Consequentially we observe for the 1-dimensional marginal distributions that $B_t^m$ converges in law to $B_t$, and write $p^m(t,x) := \mathbb P\left( B_t^m = x \right)$ for the discrete mass evolution of $B^m$.

Let $U$ be a uniformly distributed random variable independent of $(\zeta_k)_{k \in \N}$, and define in analogy to $T^N$, c.f.\ \eqref{TN}, the time $T^{N,m}$ which indicates when a particle $X^{N,m}$ changes behaviour from ``lazy'' to ``busy'':
\begin{equation} \label{TNm}
	T^{N,m} := \begin{cases} 0 & B_0^m \in G^N, \\ \inf \left\{ t > 0 \colon U \geq \frac{\tilde p^m(t, B_0^m)}{\tilde p^m(0,B_0^m)} \right\} & B_0^m \in C^N, \end{cases}
\end{equation}
The time-change $\tau^{N,m}$ is given by
\begin{equation}
	\label{tauNm}
	\tau^{N,m}_t := \inf\left\{ s > 0 \colon A^{N,m}_s > t \right\},\quad A^{n,m}_t := \int_0^t \mathds 1_{G^N}(B^{N,m}_s) \, ds.
\end{equation}
We define the time-changed scaled random walk $X^{N,m}$
\begin{equation} \label{XNm}
	X_t^{N,m} := \begin{cases} B_0^m & t < T^N, \\ B_{\tau^{N,m}_{t - T^{N,m}}}^m & t \geq T^{N,m}. \end{cases}
\end{equation}
It is evident that this defines a strong Markov process taking values in $\sqrt{\frac{2}{m}} \Z$.
The aim of the remainder of this subsection is to first establish that $X^{N,m}$ preserves the marginals of $B^m$, see Theorem \ref{XNm and Bm same 1dim dist}.
Second, we give a proof for convergence of $(X^{N,m})_{m \in \N}$ to $X^N$ as random variables on the Skorokhod space $\mathcal D([0,\infty[)$ in Theorem \ref{XNm to XN}.

\subsubsection{The marginals of \texorpdfstring{$X^{N,m}$}{TEXT}}

An advantage of the discrete level is that evolving the marginal distribution in time is achieved iteratively.
By rescaling $B^m$ and $X^{N,m}$ by a factor of $\sqrt{\frac{m}{2}}$, we obtain random walks
\begin{equation}
	\label{auxiliary random walks}
	\tilde B^m_l := \sqrt{\frac{m}{2}} B_{\frac{l}{m}}^m = \sum_{k = 1}^{m+l} \zeta_k,\quad \tilde X^{N,m}_l := \sqrt{\frac{m}{2}} X^{N,m}_{\frac{l}{m}},\quad l \in \N,
\end{equation}
on $\Z$.
Then $X^{N,m}$ and $B^m$ have the same 1-dimensional marginal if and only if $\tilde X^{N,m}$ and $\tilde B^m$ have this as well.
For the analysis of the evolution of the marginals of $\tilde X^{N,m}$, we introduce the sets
\begin{align}
	\label{discrete sets}
	\begin{split}
	&G^{N,m} := \left\{ j \in \Z \colon j \sqrt{\frac{m}{2}} \in G^N \right\},\quad C^{N,m} := \Z \setminus G^{N,m},
	\\	&\hspace{.3cm}\partial G^{N,m} := \left\{ j \in G^{N,m} \colon \exists k \in C^{N,m} \text{ s.t. } |j-k| = 1 \right\},
	\end{split}
\end{align}
on which the transition probabilities of $\tilde X^{N,m}$ exhibit different behaviour.
In the interior of $G^{N,m}$, that means here $G^{N,m} \setminus \partial G^{N,m}$ the transition probabilities behave like the one of $\tilde B^m$ and satisfy the discrete heat equation \eqref{discrete heat equation}.
From this observation it is easy to conclude that
\begin{equation}
	\label{in interior}
	\tilde X^{N,m}_l \sim \tilde B^m \implies \forall i \in G^{N,m} \setminus \partial G^{N,m} \text{ we have } \mathbb{P}\left( \tilde X^{N,m}_{l+1} = i \right) = \mathbb P\left( \tilde B^m_{l+1} = i \right).
\end{equation}

\begin{theorem} \label{XNm and Bm same 1dim dist}
	The processes $X^{N,m}$ and $B^m$ have the same 1-dimensional marginal distributions.
\end{theorem}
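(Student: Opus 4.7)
The plan is to prove the marginal identity by induction on $l \in \mathbb{N}_0$, after passing to the integer-lattice processes $\tilde X^{N,m}$ and $\tilde B^m$ introduced in \eqref{auxiliary random walks}: it suffices to show that $\mathbb{P}(\tilde X^{N,m}_l = j) = \mathbb{P}(\tilde B^m_l = j) = \tilde p_{m+l}(j)$ for every $l \in \mathbb{N}_0$ and every $j \in \mathbb{Z}$. The base case $l=0$ is immediate from \eqref{XNm}, which forces $\tilde X^{N,m}_0 = \tilde B^m_0$. For the inductive step, I would split $j$ according to whether it lies in the interior of $G^{N,m}$, in $C^{N,m}$, or on $\partial G^{N,m}$.

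The interior case is handled by \eqref{in interior}. The case $j \in C^{N,m}$ is also straightforward: busy particles are confined to $G^{N,m}$ and lazy particles are frozen at their starting position, so the only contribution comes from lazy particles starting at $j$ that have not yet switched to busy. Using the definition \eqref{TNm} of $T^{N,m}$ and the independence of $U$,
\[
	\mathbb{P}(\tilde X^{N,m}_{l+1} = j) = \mathbb{P}(\tilde B^m_0 = j)\cdot \mathbb{P}\!\left(U < \frac{\tilde p^m((l+1)/m,\, j\sqrt{2/m})}{\tilde p^m(0,\, j\sqrt{2/m})}\right) = \tilde p_{m+l+1}(j).
\]

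The substantive case, and the main obstacle, is $j = a \in \partial G^{N,m}$. Write $b := b_{n-1}^{N,m}$ for the neighbouring boundary on the other side of the gap $\{b+1,\ldots,a-1\}\subseteq C^{N,m}$. The mass arriving at $a$ at step $l+1$ is a sum of four contributions: (i) a busy particle at $a$ stays at $a$ with probability $\tfrac{1}{2} + \tfrac{a-b-1}{4(a-b)}$; (ii) a busy particle at $a+1$ moves left with probability $\tfrac{1}{4}$; (iii) a busy particle at $b$ crosses the gap to $a$ with probability $\tfrac{1}{4(a-b)}$, via the standard gambler's-ruin computation for the $\{-1,0,+1\}$ walk conditioned to leave $(b,a)$; and (iv) a lazy particle at $\tilde x \in \{b+1,\ldots,a-1\}$ transitions from lazy to busy during the $l$-th step and lands at $a$ with probability $\tfrac{\tilde x - b}{a-b}$, contributing in total
\[
	\sum_{\tilde x = b+1}^{a-1} \bigl(\tilde p_{m+l}(\tilde x) - \tilde p_{m+l+1}(\tilde x)\bigr) \cdot \frac{\tilde x - b}{a-b}.
\]

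To close the induction, I would rewrite each summand in (iv) using the discrete heat equation \eqref{discrete heat equation} as $\tilde p_{m+l}(\tilde x) - \tilde p_{m+l+1}(\tilde x) = -\tfrac{1}{4}\Delta \tilde p_{m+l}(\tilde x)$, where $\Delta$ denotes the discrete Laplacian, and then apply summation by parts against the linear weight $\phi(\tilde x) := \tfrac{\tilde x - b}{a - b}$. Since $\Delta \phi \equiv 0$, the interior sum telescopes to zero and only boundary terms at $\{b, b+1, a-1, a\}$ survive; an explicit computation with $\phi$ reduces (iv) to $\tfrac{1}{4}\tilde p_{m+l}(a-1) - \tfrac{a-b-1}{4(a-b)}\tilde p_{m+l}(a) - \tfrac{1}{4(a-b)}\tilde p_{m+l}(b)$. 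Combining this with (i), (ii), (iii) and the inductive hypothesis, the coefficients of $\tilde p_{m+l}(b)$ cancel and what remains is $\tfrac{1}{2}\tilde p_{m+l}(a) + \tfrac{1}{4}\tilde p_{m+l}(a-1) + \tfrac{1}{4}\tilde p_{m+l}(a+1)$, which equals $\tilde p_{m+l+1}(a)$ by \eqref{discrete heat equation}. The hard part is precisely this Abel-summation identity, which is the discrete analog of the integration-by-parts step in \eqref{3}. The mirror argument handles $j = b_n^{N,m}$, completing the induction.
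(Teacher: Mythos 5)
Your proposal is correct and follows essentially the same route as the paper: induction on the rescaled lattice walks, the three-way case split interior/$C^{N,m}$/$\partial G^{N,m}$, gambler's-ruin probabilities for the crossings of a gap, and the discrete heat equation combined with summation by parts against the harmonic weight $\phi(\tilde x)=\tfrac{\tilde x-b}{a-b}$ to evaluate the lazy-to-busy inflow — this last step is exactly the paper's computation in \eqref{outflow from C1}--\eqref{outflow from C2}, and your resulting boundary terms and final cancellation check out. The only point worth making explicit is that your identity in the $C^{N,m}$ case silently uses the monotonicity of $t\mapsto \tilde p^m(t,x)$ on $C^N$, i.e.\ Lemma \ref{lem:discrete probabilities decreasing}.
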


\begin{proof}
	The assertion is equivalent to showing that the 1-dimensional marginal distributions of the processes defined in \eqref{auxiliary random walks} taking values in $\Z$ coincide.
	We show the statement by induction, so assume that $\tilde B_{l}^m \sim \tilde X_l^{N,m}$ for a fixed $l \in \N$.

	We treat the three cases corresponding to the three sets defined in \eqref{discrete sets} each, separately:

	\begin{enumerate}[label = \arabic*)]

		\item The case $i \in G^{N,m} \setminus \partial G^{N,m}$ was already discussed in \eqref{in interior}. We have $\mathbb P(\tilde X^{N,m}_{l+1} = i) = \mathbb P(\tilde B^m_{l+1} = i) =\tilde p_{m+l+1}(i)$.

		\item Let $i \in C^{N,m}$ and denote by $j_1,j_2 \in \partial G^{N,m}$ the boundary points with $j_1 < i < j_2$ and $]j_1,j_2[ \, \cap \Z =: C_i^{N,m} \subseteq C^{N,m}$. 
		We abbreviate $x = i \sqrt{\frac{2}{m}}$ and compute the net mass change at $i$ from time $l$ to time $l+1$ of $\tilde X^{N,m}$.
		Applying Lemma \ref{lem:discrete probabilities decreasing} ensures that $t \mapsto p^m(t,x)$ is strictly decreasing as $m + l \geq 2 i^2$.
		Thus we find
	\begin{align*}
		\mathbb P\left( \tilde X_l^{N,m} = i \right) - \mathbb P\left( \tilde X_{l+1}^{N,m} = i  \right) 
		&= \mathbb P\left( X_{\frac{l}{m}}^{N,m} = x\right) - \mathbb P\left( X_\frac{l+1}{m}^{N,m} = x \right)
	\\	&= \mathbb P\left( B^{m}_0 = x, \frac{l}{m} < T^{N,m} \leq \frac{l+1}{m} \right)
	\\	&= \mathbb P\left(B_0^m = x, p^m\left(\frac{l+1}{m},x\right)\leq Up^m\left(0, x\right)<p^m\left(\frac{n}{m}, x \right) \right)
	\\	&= \mathbb P\left( \tilde B^m_l = i\right) - \mathbb P\left( \tilde B^m_{l+1} = i\right)  
	\\	&= \tilde p_{m+l}(i) - \tilde p_{m+l+1}(i)> 0,
	\end{align*}
	and particularly, $\mathbb P\left( \tilde X_{l+1}^{N,m} = i \right) = \tilde p_{m+l+1}(i)$.
	
	Using the optional stopping theorem for the martingale $(\tilde B^m_k)_{k = -m}^\infty$ allows us to compute 
	\begin{multline*}
		\mathbb P\left( \tilde X^{N,m}_{l+1} = j_1 \mid \tilde X^{N,m}_l = i, T^{N,m} = \frac{l+1}{m}\right)
	\\	= \mathbb P\left( \inf\{k \in \N \colon \tilde B^m_{k - m} = j_1 - i \} < \inf\{k \in \N \colon \tilde B_{k-m}^m = j_2 - i \}\right) = \frac{j_2 - i}{j_2 - j_1}.
	\end{multline*}
	Thus, we obtain by the discrete version of the heat equation \eqref{discrete heat equation} and the inductive assumption
	\begin{gather}
		\nonumber
		\mathbb P\left( \tilde X^{N,m}_{l+1} = j_1, \tilde X_l^{N,m} \in C^{N,m}_i \right) = \sum_{k = j_1+1}^{j_2 - 1} \frac{j_2-k}{j_2 - j_1} \left( \mathbb P\left( \tilde X_l^{N,m} = k \right) - \mathbb P \left( \tilde X_{l+1}^{N,m} = k \right) \right)
	\\	 \nonumber 
	= \sum_{k = j_1+1}^{j_2 - 1} \frac{j_2-i}{j_2 - j_1} \left( \frac{1}{2} \tilde p_{m+l}(k) - \frac{1}{4} \tilde p_{m+l}(k-1) - \frac{1}{4} \tilde p_{m+l}(k+1) \right)
	\\	\label{outflow from C1}
	= \frac{1}{4}\left(  \tilde p_{m+l}(j_1 + 1) - \tilde p_{m+l}(j_1) + \frac{\tilde p_{m+l}(j_1) - \tilde p_{m+l}(j_2)}{j_2 - j_1} \right).
	\end{gather}
	Analogously, we get
	\begin{equation} \label{outflow from C2}
		\mathbb P\left( \tilde X_{l+1}^{N,m} = j_2, \tilde X_l^{N,m} \in C_i^{N,m}\right) = \frac{1}{4} \left( \tilde p_{m+l}(j_2 - 1) - \tilde p_{m+l}(j_2) - \frac{\tilde p_{m+l}(j_1) - \tilde p_{m+l}(j_2)}{j_2 - j_1}  \right)
	\end{equation}
	
	\item Finally, let $i \in \partial G^{N,m}$ and denote by $j_1, j_2 \in G^{N,m}$ the neighbours in $G^{N,m}$ of $i$ with $j_1 < i < j_2$.
	That means, $]j_1,i[ \cap \Z \, \subseteq C^{N,m}$ and $]i,j_2[ \, \cap \Z \subseteq C^{N,m}$.
	The optimal stopping theorem yields
	\begin{align} \label{flow G1}
		\mathbb P\left( \tilde X_{n+1}^{N,m} = j_1 \mid \tilde X_{n}^{N,m} = i \right) &= \frac{1}{4(i - j_1)} = \mathbb P\left( \tilde X_{n+1}^{N,m} = i \mid \tilde X_{n}^{N,m} = j_1 \right),
	\\	\label{flow G2} \mathbb P\left( \tilde X_{n+1}^{N,m} = j_2 \mid \tilde X_{n}^{N,m} = i \right) &= \frac{1}{4(j_2 - i)} = \mathbb P\left( \tilde X_{n+1}^{N,m} = i \mid \tilde X_{n}^{N,m} = j_2\right).
	\end{align}
	By \eqref{outflow from C1}, \eqref{outflow from C2}, \eqref{flow G1}, \eqref{flow G2}, and the inductive assumption, we find that the inflows and outflows are precisely canceling out such that $\mathbb P\left( \tilde X^{N,m}_{l+1} = i \right) = \tilde p_{m+l+1}(i)$, which concludes the inductive step.
	\end{enumerate}
\end{proof}

The next lemma ensures that the discrete version of inequality \eqref{lazy mass change} holds, which we use to prove that the process $\tilde X^{N,m}$ jumps with the correct rate from $C^N$ to $\partial G^{N,m}$.

\begin{lemma} \label{lem:discrete probabilities decreasing}
	Let $l,j \in \N$, $l \geq 2 j^2$. Then
	\begin{equation}
		\label{discrete probabilities decreasing}
		\tilde p_l(j) = \mathbb P\left(\sum_{k = 1}^{l} \zeta_k = j \right) > \mathbb P\left( \sum_{k = 1}^{l+1} \zeta_k = j \right) = \tilde p_{l+1}(j).
	\end{equation}
\end{lemma}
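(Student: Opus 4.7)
The plan is a direct computation using the explicit binomial formula \eqref{discrete normal distribution}, reducing the claim to an elementary inequality in $l$ and $j$. There is no need for any probabilistic argument beyond what is already in the excerpt.

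First I would form the ratio
\[
	\frac{\tilde p_{l+1}(j)}{\tilde p_l(j)} = \frac{2^{-2(l+1)}\binom{2l+2}{l+1+j}}{2^{-2l}\binom{2l}{l+j}}
	= \frac{1}{4}\cdot\frac{(2l+2)!}{(2l)!}\cdot\frac{(l+j)!\,(l-j)!}{(l+1+j)!\,(l+1-j)!}
	= \frac{(2l+1)(2l+2)}{4(l+1+j)(l+1-j)},
\]
which collapses to $\frac{(2l+1)(2l+2)}{4((l+1)^2-j^2)}$.

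Thus \eqref{discrete probabilities decreasing} is equivalent to the inequality $(2l+1)(2l+2) < 4((l+1)^2 - j^2)$. Expanding and factoring gives
\[
	4(l+1)^2 - (2l+1)(2l+2) = (2l+2)\bigl(2(l+1) - (2l+1)\bigr) = 2(l+1),
\]
so the desired inequality reduces to $2(l+1) > 4j^2$, that is $l+1 > 2j^2$, which is exactly the hypothesis $l \geq 2j^2$.

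There is essentially no obstacle here: the only moving part is verifying the algebraic identity $4(l+1)^2 - (2l+1)(2l+2) = 2(l+1)$, and once that is in hand the lemma follows. (If one prefers a conceptual reading, the ratio computed above is nothing but a central-binomial version of the fact that the density $p_t(j)$ of $N(0,t)$ at a fixed $j$ is decreasing in $t$ precisely when $t > j^2$; the extra factors of $2$ arise from the lazy random walk \eqref{discrete increments} having variance $\tfrac12$ per step.)
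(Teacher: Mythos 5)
Your proof is correct and follows essentially the same route as the paper: both compute the ratio of $\tilde p_{l+1}(j)$ to $\tilde p_l(j)$ from the explicit binomial formula and reduce the claim to the elementary inequality $l+1>2j^2$. (Your algebra is in fact cleaner than the paper's, whose displayed intermediate ratio $\frac{(n+1+j)(n+1-j)}{(n+1)(n+2)}$ contains a typo, though its conclusion agrees with yours.)
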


\begin{proof}
	The probabilties in \eqref{discrete probabilities decreasing} have a closed form given by \eqref{discrete normal distribution}.
	We compute the ratio of left-hand side to right-hand side of \eqref{discrete probabilities decreasing}:
	\begin{align*}
		\frac{\mathbb P\left(\sum_{k = 1}^{n} \zeta_k = j \right)}{ \mathbb P\left( \sum_{k = 1}^{n+1} \zeta_k = j \right)} &= \frac{2^{-2n}}{2^{-2(n+1)}} \frac{\binom{2n}{n+j}}{\binom{2(n+1)}{n+1+j}}
		= \frac{(n+1+j)(n+1-j)}{(n+1)(n+2)}
	\\	&= 1 + \frac{\frac{n+1}{2} - j^2}{(n+1)(n+2)} > 1,
	\end{align*}
	where the last inequality holds by assumption, and conclude with \eqref{discrete probabilities decreasing}.
\end{proof}

\subsubsection{Convergence of \texorpdfstring{$X^{N,m}$}{TEXT} to \texorpdfstring{$X^N$}{TEXT}}

In the last subsection we have shown that $X^{N,m}$ has the correct marginals.
It remains to prove the appropriate convergence of $X^{N,m}$ to $X^N$, which is the purpose of this subsection.

\begin{theorem}
	\label{XNm to XN}
	As random variables on $\mathcal D([0,\infty[)$, $(X^{N,m})_{m \in \N}$ converges weakly to $X^N$.
\end{theorem}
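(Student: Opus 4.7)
The approach I would take is a continuous-mapping argument on a coupling provided by the Skorokhod representation theorem. By Donsker's theorem, $B^m \to B$ in law in $D([0,\infty[)$, so one may realize $(B^m, U)_m$ and $(B, U)$ on a common probability space in such a way that $B^m \to B$ holds almost surely in $D([0,\infty[)$; since $B$ has continuous paths, this convergence is in fact uniform on every compact time interval. Keeping this coupling fixed, I would then show that the deterministic functional $(\omega, u) \mapsto X^N(\omega, u)$ defined by formulas \eqref{A^N}--\eqref{XN} is, in the appropriate Skorokhod sense, continuous at the limiting input $(B, U)$, and that the discretized functional producing $X^{N,m}$ from $(B^m, U)$ converges to it.

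The functional factors into three pieces that I would handle in turn. First, the occupation times: as $\partial G^N$ is a finite set, $\int_0^t \mathds 1_{\partial G^N}(B_s) \, ds = 0$ almost surely, so uniform convergence $B^m \to B$ together with dominated convergence yields $A^{N,m}_t \to A^N_t$ pointwise in $t$, and equicontinuity (every $A^{N,m}$ is Lipschitz-1) upgrades this to uniform convergence on compact intervals. Second, the right-continuous inverses: uniform convergence of Lipschitz-1 functions implies that $\tau^{N,m} \to \tau^N$ in the Skorokhod $J_1$ topology, with the jumps of $\tau^N$ corresponding exactly to excursions of $B$ across connected components of $C^N$. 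Third, the delay $T^{N,m}$: a local central limit theorem estimate gives $p^m(t, x) \to p(t, x)$ locally uniformly, and since the ratio $t \mapsto p(B_0, t)/p(B_0, 0)$ is strictly decreasing on $\{B_0 \in C^N\}$ by \eqref{lazy mass change}, the stopping rule in \eqref{TNm} converges almost surely, $T^{N,m} \to T^N$ (the exceptional event where $U$ hits a jump level of the discrete ratio has probability zero).

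Finally I would assemble the pieces: because the limiting path $B$ is continuous, the composition map $(\omega, \sigma) \mapsto \omega \circ \sigma$ is continuous at $(B, \tau^N_{\cdot - T^N})$ under uniform convergence in the first argument and $J_1$ convergence in the second; this standard continuity lemma for the composition map then yields $B^m_{\tau^{N,m}_{\cdot - T^{N,m}}} \to B_{\tau^N_{\cdot - T^N}}$ in $D([0,\infty[)$ almost surely. Combining this with the trivial convergence of the lazy segment $X^{N,m}_t = B_0^m$ on $[0, T^{N,m}]$ to $X^N_t = B_0$ on $[0, T^N]$, one obtains $X^{N,m} \to X^N$ almost surely in the Skorokhod topology, and hence in law, as required. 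I expect the main obstacle to be this composition step: since $\tau^N$ genuinely has jumps and $B^m$ is itself only càdlàg rather than continuous, one has to argue that at each jump of $\tau^N$ the pre- and post-jump positions (both lying in $\partial G^N$) are stably approximated by $B^m$ at the corresponding nearby jump times of $\tau^{N,m}$, so that no spurious misalignment arises when fitting the Skorokhod time change; the key input is that these boundary values are visited cleanly by $B$ and therefore approximated with high accuracy by $B^m$ on both sides of each excursion into $C^N$.
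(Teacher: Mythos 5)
Your overall architecture matches the paper's: Skorokhod representation to upgrade Donsker's theorem to almost sure convergence, convergence of the occupation times $A^{N,m}$ using that $B$ spends Lebesgue-null time on the finite set $\partial G^N$ (property \eqref{path property b} of Lemma \ref{lem:path properties}), convergence of the delays $T^{N,m}$, and a continuity argument for the time-changed composition (the paper packages the last two points as Proposition \ref{BmTNm to BTN} and Lemma \ref{lem:time changed path converges}). The one genuine gap is the sentence ``uniform convergence of Lipschitz-1 functions implies that $\tau^{N,m}\to\tau^N$ in the Skorokhod $J_1$ topology.'' This is false in general: locally uniform convergence of continuous increasing functions $A^n\to A$ does not force $J_1$ convergence of their right-continuous inverses when the limit inverse has jumps. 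If $A$ is flat on $[s_0,s_1]$ but each $A^n$ picks up a small amount of increase somewhere inside $]s_0,s_1[$, or fails to increase immediately after $s_1$, then the single jump of $\tau$ of size $s_1-s_0$ is approximated by two smaller jumps of $\tau^n$, or by one strictly larger jump, and $J_1$ convergence fails (only $M_1$ convergence is automatic). This step is precisely where the paper invests its effort.

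What rescues the claim is path property \eqref{path property a}: almost surely, whenever $B$ touches a point of $\partial G^N$ it visits the open set $G^N\setminus\partial G^N$ at times arbitrarily close by, on both sides of each excursion into $C^N$. This guarantees, first, that for large $m$ the path $B^m$ lies in the open set $C^N$ on an interval $[s_0^m,s_1^m]$ with $s_i^m\to s_i$, giving the lower bound on the jump $\Delta\tau^{N,m}$, and second, that $B^m$ genuinely enters $G^N\setminus\partial G^N$ and accumulates occupation time just outside $[s_0-\epsilon,s_1+\epsilon]$, giving the matching upper bound; together these verify the Jacod--Shiryaev criterion (convergence of $\tau^{N,m}_t$ for a dense set of $t$ plus convergence of jump sizes along suitable $t_m\to t$). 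You gesture at this in your final paragraph (``visited cleanly by $B$ on both sides of each excursion''), but you locate the difficulty in the composition map rather than in the inverse time change; in fact the composition step is unproblematic once $J_1$ convergence of $\tau^{N,m}$ is in hand, because the outer path $B$ is continuous and the estimate $\sup_s|f^m(\tau^{N,m}_{\alpha_m(s)})-f(\tau^N_s)|\le\sup|f^m-f|+w_f\bigl(\sup_s|\tau^{N,m}_{\alpha_m(s)}-\tau^N_s|\bigr)$ closes the argument. A minor further remark: your coupling keeps the same $U$ and invokes a local CLT to get $T^{N,m}\to T^N$, whereas the paper proves joint weak convergence of $(B^m,T^{N,m})$ by a Wasserstein/partition argument (Proposition \ref{BmTNm to BTN}) that avoids the local CLT; your route is workable provided you also check that $\mathds 1_{C^N}(B_0^m)\to\mathds 1_{C^N}(B_0)$ almost surely, which holds since $B_0\notin\partial G^N$ a.s.\ and $C^N$ is open.
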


Before proving this theorem we prepare useful ingredients which we use in its proof. 

\begin{proposition} \label{BmTNm to BTN}
	As random variables on $\mathcal D([0,\infty[) \times [0,\infty[$, $(B^m,T^{N,m})_{m \in \N}$ converges weakly to $(B,T^N)$.
\end{proposition}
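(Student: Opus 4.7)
The plan is to combine Donsker's invariance principle with a Skorokhod coupling in order to reduce the joint weak convergence to an almost sure statement, and then to establish $T^{N,m} \to T^N$ almost surely via a local central limit theorem for the aperiodic random walk with increments in $\{-1, 0, 1\}$. Since $U$ is independent of both $(B^m)_{m \in \N}$ and $B$, joint weak convergence $(B^m, U) \Rightarrow (B, U)$ in $\mathcal D([0,\infty[) \times [0,1]$ is automatic from Donsker's theorem, and by Skorokhod representation we may pass to a probability space on which $B^m \to B$ almost surely in $\mathcal D([0,\infty[)$ with a fixed independent $U$ retained. Evaluating at the (deterministic) continuity point $t = 0$ then yields $B_0^m \to B_0$ a.s.

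Because $B_0 \sim N(0,1)$ has a continuous law, it almost surely avoids the finite boundary set $\partial G^N$, so only two cases need to be treated. On $\{B_0 \in \text{int}(G^N)\}$, openness forces $B_0^m \in G^N$ for all sufficiently large $m$, and hence $T^{N,m} = T^N = 0$ eventually. On $\{B_0 \in \text{int}(C^N)\}$ one analogously has $B_0^m \in C^N$ eventually, and the two stopping times are determined by the same $U$ as first passage times, namely $T^{N,m} = \inf\{t > 0 : r^m(t) \leq U\}$ and $T^N = \inf\{t > 0 : r(t) \leq U\}$, where I set $r^m(t) := p^m(t, B_0^m)/p^m(0, B_0^m)$ and $r(t) := p(B_0, t)/p(B_0, 0)$. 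Since $C^N \subseteq \, ]0,1[$, equation \eqref{lazy mass change} implies that $r$ is continuous and strictly decreasing from $r(0) = 1$ to $r(\infty) = 0$ on $[0,\infty[$; because $U \in (0,1)$ a.s., $r^{-1}$ is continuous at $U$ and $T^N = r^{-1}(U)$.

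The main technical ingredient is a quantitative local central limit theorem for the walk $\sum_{k=1}^{l}\zeta_k$, to the effect that after the natural scaling one has $\sqrt{m/2}\, p^m(t, x_m) \to p(x, t)$ uniformly for $(x_m, t)$ in compact subsets with $x_m \in \sqrt{2/m}\,\Z$ and $x_m \to x$; this uniform LCLT is the step I expect to be the main obstacle, though it is classical for aperiodic lattice walks (cf.\ Spitzer or Feller). Granting it and using that $p(B_0, 0) > 0$, we obtain uniform convergence $r^m \to r$ on every compact time interval. Strict monotonicity and continuity of $r$ then give, for arbitrary $\epsilon > 0$ and sufficiently large $m$, the inequalities $r^m(T^N - \epsilon) > U > r^m(T^N + \epsilon)$, whence $|T^{N,m} - T^N| \leq \epsilon$. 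Combining the two cases yields $T^{N,m} \to T^N$ almost surely, so $(B^m, T^{N,m}) \to (B, T^N)$ almost surely, which in particular delivers the desired weak convergence on $\mathcal D([0,\infty[) \times [0,\infty[$.
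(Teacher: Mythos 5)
Your argument is correct, but it follows a genuinely different route from the paper. The paper never establishes pointwise convergence of the discrete densities: it fixes a spatial partition $(P_j)_j$ of mesh $h$ compatible with $G^N$ and $C^N$, uses the telescoping identity $\mathbb P(B_0^m\in P_j,\ kh<T^{N,m}\le(k+1)h)=\mathbb P(B^m_{kh}\in P_j)-\mathbb P(B^m_{(k+1)h}\in P_j)$ (a consequence of the definition of $T^{N,m}$ via the ratio of discrete densities together with Lemma \ref{lem:discrete probabilities decreasing}), and then invokes only Donsker plus Portmanteau at the level of interval probabilities before assembling an explicit coupling that bounds the Wasserstein distance by $\mathcal W(\mu^m,\mu)+2h+o(1)$ and letting $h\to0$. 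You instead pass to an almost-sure coupling by Skorokhod representation and prove $T^{N,m}\to T^N$ pointwise, which forces you to upgrade Donsker to a local central limit theorem $\sqrt{m/2}\,p^m(t,x_m)\to p_t(x)$ uniformly on compacts. That extra ingredient is genuinely needed for your route, but it is unproblematic here: the walk is aperiodic and \eqref{discrete normal distribution} gives the explicit binomial formula $\tilde p_l(j)=2^{-2l}\binom{2l}{l+j}$, so the required uniform LCLT follows from Stirling's formula; the remaining steps (a.s. avoidance of the finite set $\partial G^N$, strict monotonicity of $r$ from \eqref{lazy mass change} on $C^N\subseteq\,]0,1[$, and the sandwich $r^m(T^N-\epsilon)>U>r^m(T^N+\epsilon)$ using monotonicity of $r^m$) are all sound. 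One small repair: the Skorokhod representation theorem applied to the laws of $(B^m,U)$ does not by itself let you ``retain a fixed $U$'' across $m$; either apply it to the sequence $(B^m)_m$ alone and adjoin an independent uniform variable on a product space, or note that the representing $\hat U^m\to\hat U$ a.s. suffices for your monotonicity argument. In exchange for the LCLT your proof is shorter and more transparent than the paper's partition-and-couple argument, which buys economy of hypotheses (only weak convergence of marginals) at the cost of bookkeeping.
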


\begin{proof}
	To make the proof rigorous, we choose a 1-bounded metric $d$ compatible with the topology on $\mathcal D([0,\infty[)$ with the property that
	\begin{equation}
		\label{Skorokhod metric time shift}
		d(f,(x + f((t-s)\wedge 0))_{t \geq 0}) \leq s + |x| ,\quad (x,s) \in \R \times [0,\infty[.
	\end{equation}
	For $\mu,\nu \in \mathcal P(\mathcal D([0,\infty[))$ and $\tilde\mu,\tilde\nu \in \mathcal P(\mathcal D([0,\infty[) \times [0,\infty[)$ we denote their 1-Wasserstein distance by
	\begin{align}
		\label{Skorokhod space Wasserstein distance}
		\begin{split}
		\mathcal W(\mu,\nu) :=& \inf \left\{ \mathbb E\left[ d(X,Y) \right] \colon X \sim \mu, Y \sim \nu\right\}
	\\	\tilde{\mathcal W}(\tilde \mu,\tilde \nu) :=& \inf\left\{ \mathbb E \left[ d(X,Y) + |S-T| \right] \colon (X,S) \sim \tilde \mu, (Y,T) \sim \tilde \nu \right\}.
		\end{split}
	\end{align}
	We write $\mu^{N,m}$, $\mu^N$, $\mu^m$, and $\mu$ for the laws of $(B^m,T^{N,m})$, $(B,T^N)$, $(\sqrt{\frac{2}{m}}\sum_{k = 1}^{\lfloor mt \rfloor}\zeta_j )_{0 \leq t < \infty}$, and a standard Brownian motion $\tilde B$ starting at $0$, respectively, on $\mathcal D([0,\infty[) \times [0,\infty[$.

	For fixed $h > 0$ consider a spatial partition $(P_{j})_{j \in \N}$ of $\R$ into intervals of maximal length $h$ with $I_1 \cup I_2 = \N$ and
	\[
		\bigcup_{j \in I_1} P_j = G^N, \quad \bigcup_{j \in I_2} P_j = C^N.
	\]
	Recall that the marginal distributions of $B^m$ and $B$ are converging, thus, Portmanteau's theorem yields
	\[
		\sum_{x \in P_j} p^m(t,x) = \mathbb P\left( B_t^m \in P_j \right) \overset{m \to \infty}{\longrightarrow} \mathbb P\left( B_t \in P_j \right).
	\]
	Additionally we know by Lemma \ref{lem:discrete probabilities decreasing} that for any $x\in C^N \subseteq [-1,1]$ the probabilties $p^m(t,x)$ are strictly decreasing in $t \in [0,\infty[$, therefore, if $j \in I_2$ and $k \in \N$ then
	\begin{align*}
		&\mathbb P\left( B_0^m \in P_j, kh < T^{N,m} \leq (k+1)h \right)
	\\	& \hspace{1cm} = \mathbb P\left( B_0^m \in P_j, \frac{p((k+1)h,B_0^m)}{p(0,B_0^m)} \leq U < \frac{p(kh,B_0^m)}{p(0,B_0^m)}\right)
	\\	& \hspace{1cm} = \sum_{x \in P_j} p(1+kh,x) - p(1 + (k+1)h, x)
		\overset{m \to \infty}{\longrightarrow} \mathbb P\left( B_{kh} \in P_j \right) - \mathbb P\left( B_{(k+1)h} \in P_j \right)
	\\	& \hspace{8.8cm} = \mathbb P\left( B_0\in P_j, kh < T^N \leq (k+1)h \right).
	\end{align*}
	We introduce the real-valued sequences
	\begin{align*}	
		a_j^m :=& \mathbb P\left( B_0^m \in P_j \right) \vee \mathbb P\left( B_0 \in P_j \right),
	\\	b_j^m :=& \mathbb P\left( B_0^m \in P_j \right) \wedge \mathbb P\left( B_0 \in P_j \right),
	\\	a^m_{j,k} :=& \mathbb P\left( B_0^m \in P_j, kh \leq T^{N,m} < (k+1)h \right) \vee \mathbb P\left( B_0 \in P_j, kh \leq T^N < (k+1)h \right),
	\\	b^m_{j,k} :=& \mathbb P\left( B_0^m \in P_j, kh \leq T^{N,m} < (k+1)h \right) \wedge \mathbb P\left( B_0 \in P_j, kh \leq T^N < (k+1)h \right),
	\end{align*}
	for $j,k \in \Z \times \N$,
	and note that by the previous arguments $(a^m_j - b^m_j)_{m \in \N}$ and $(a^m_{j,k} - b^m_{j,k})_{m \in \N}$ are vanishing sequences.
	Some analysis shows that
	\begin{equation}
		\label{null sequences}
		\lim_{m \to \infty}\sum_{j \in I_1} a^m_j - b^m_j = 0, \quad \lim_{m \to \infty} \sum_{j \in I_2} \sum_{k \in \N} a^m_{j,k} - b^m_{j,k} = 0.
	\end{equation}
	Moreover conditional on $B_0^m = x$ we have $(B_{t}^m)_{t \geq 0} \sim (\sum_{j = 1}^{\lfloor tm \rfloor} \zeta_j + x)_{t \geq 0}$.
	We estimate the the distance for particles starting in $P_j$ where either $j \in I_1$ or $j \in I_2$ separately:
	\begin{equation}
		\label{error bounded depending on h}
		\tilde{\mathcal W}\left( \mu^{N,m}, \mu^N \right) \leq  \mathcal W(\mu^m,\mu) + 2h + \sum_{j \in I_1} a^m_j - b^m_j + \sum_{j \in I_2} \sum_{k \in \N} a^m_{j,k} - b^m_{j,k}.
	\end{equation}
	On $j \in I_1$ we obtain due to the spatial partitions an error of maximal $h + \mathcal W_1(\mu^m,\mu)$ plus (as the metric $d$ is bounded by 1) the excess mass $\sum_{j \in I_1} a^m_j - b^m_j$.
	For $j \in I_2$ we have to take into account an additional error (of maximal $h$) due to the second components of $(B^m,T^{N,m})$ and $(B,T^N)$.
	From \eqref{null sequences} and \eqref{error bounded depending on h} it is easily deductible that
	\[
		\limsup_{m \to \infty} \tilde{ \mathcal W}_1\left( \mu^{N,m}, \mu \right) \leq 2h,
	\]
	which yields the desired convergence as $h > 0$ was arbitrary.
\end{proof}

\begin{lemma}
	\label{lem:time changed path converges}
	Let $f \in C([0,\infty[)$ be a path satisfying \eqref{path property a} and \eqref{path property b}.
	Let $(f^n)_{n \in \N}$ be a sequence converging to $f$ in $\mathcal D([0,\infty[)$ with $\lim_{t \to \infty}\tau_t^N(f^n) = \infty$, where
	the time change $\tau^N$ is given by \eqref{tau^N}.
	Then the time changed paths satisfy
	\begin{equation}
		\label{time changed path converges}
		(f^n(\tau^N_t(f^n)))_{0 \leq t < \infty} \to (f(\tau^N_t(f)))_{0 \leq t < \infty} \text{ in }\mathcal D([0,\infty[).
	\end{equation}
\end{lemma}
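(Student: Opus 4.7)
The overall plan is to reduce the assertion to uniform convergence on compact intervals, exploiting that the target path is continuous. Since $f \in C([0,\infty[)$, convergence $f^n \to f$ in the Skorokhod topology is equivalent to uniform convergence on every compact subset of $[0,\infty[$; and since $f \circ \tau^N(f)$ will turn out to be continuous, it is sufficient to establish the same mode of convergence for the time-changed paths.

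The first main step is to prove that $A^N(f^n)_t \to A^N(f)_t$ uniformly on compact intervals. The obstacle here is that $\mathds 1_{G^N}$ is discontinuous, so pointwise convergence $f^n(s) \to f(s)$ does \emph{not} immediately transfer to $\mathds 1_{G^N}(f^n(s))$. This is precisely what path property \eqref{path property a} (asserting that the Lebesgue measure of $\{s : f(s) \in \partial G^N\}$ vanishes) is designed to circumvent: off this null set, continuity of $f$ and uniform convergence $f^n \to f$ force $\mathds 1_{G^N}(f^n(s)) \to \mathds 1_{G^N}(f(s))$, whence bounded convergence yields pointwise convergence of the functionals $A^N(f^n)_t \to A^N(f)_t$. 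Because each $A^N(f^n)$ is $1$-Lipschitz, the family is equicontinuous, and pointwise convergence on a dense set is automatically upgraded to uniform convergence on compacts by an Arzelà–Ascoli style argument.

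Next I would invert. Property \eqref{path property b} should supply strict monotonicity (and hence continuity of the inverse $\tau^N(f)$) together with $A^N_t(f) \to \infty$. The standing hypothesis $\lim_{t \to \infty}\tau^N_t(f^n) = \infty$ guarantees that the inverses $\tau^N(f^n)$ are well-defined and finite on all of $[0,\infty[$. A standard lemma for right-continuous inverses tells us that if a sequence of nondecreasing functions converges uniformly on compacts to a strictly increasing continuous function with range $[0,\infty[$, then their right-continuous inverses also converge uniformly on compacts. Applying this to $A^N(f^n) \to A^N(f)$ yields $\tau^N(f^n) \to \tau^N(f)$ uniformly on compacts.

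The final step is purely quantitative. For $t$ in a compact interval, the triangle inequality gives
\begin{equation*}
\bigl|f^n(\tau^N_t(f^n)) - f(\tau^N_t(f))\bigr| \le \bigl|f^n(\tau^N_t(f^n)) - f(\tau^N_t(f^n))\bigr| + \bigl|f(\tau^N_t(f^n)) - f(\tau^N_t(f))\bigr|.
\end{equation*}
The first summand is bounded by the sup norm of $f^n - f$ over a larger compact interval that contains the uniformly bounded range of $\tau^N_t(f^n)$, so it tends to $0$. The second tends to $0$ by uniform continuity of $f$ on that same compact interval together with uniform convergence of $\tau^N(f^n)$. Hence $f^n \circ \tau^N(f^n) \to f \circ \tau^N(f)$ uniformly on compact intervals, and since the limit is continuous this implies \eqref{time changed path converges}. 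The most delicate point is Step 1, specifically the passage from uniform convergence of $f^n$ to convergence of the time spent in $G^N$, which is where the regularity encoded by property \eqref{path property a} is essential.
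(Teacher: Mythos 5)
There is a genuine gap, and it sits at the foundation of your reduction. You assert that $f\circ\tau^N(f)$ ``will turn out to be continuous'' and that property \eqref{path property b} supplies strict monotonicity of $A^N(f)$, hence continuity of $\tau^N(f)$. Neither holds for fixed $N$: the set $G^N$ has gaps (the components of $C^N$), and whenever $f$ makes an excursion through such a gap — entering at one boundary point $b_{n-1}^N$ and exiting at the other, $a_n^N$ — the occupation functional $A^N(f)$ is constant on that excursion, $\tau^N(f)$ has a jump, and the time-changed path $f(\tau^N(f))$ jumps from $b_{n-1}^N$ to $a_n^N$. This is not a pathology but the whole point of the construction: Proposition \ref{finite} states explicitly that $X^N=B_{\tau^N}$ \emph{fails to be continuous}. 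Consequently your appeal to the equivalence of Skorokhod and locally uniform convergence for the time-changed paths is unavailable, and the ``standard lemma for right-continuous inverses'' you invoke requires the limit $A^N(f)$ to be strictly increasing, which it is not. (You have also swapped the labels of the hypotheses: the null-measure statement is \eqref{path property b}, not \eqref{path property a}; and your argument never uses \eqref{path property a} at all, which is a warning sign, since a hypothesis of the lemma is doing no work.)

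What survives of your proposal is the first step (locally uniform convergence $A^N(f^n)\to A^N(f)$ via property \eqref{path property b}, bounded convergence, and $1$-Lipschitz equicontinuity) and the final triangle-inequality step, but the latter only works at continuity points of $\tau^N(f)$. The missing content is precisely the treatment of the jumps: one must show that $\tau^N(f^n)\to\tau^N(f)$ in $\mathcal D([0,\infty[)$ in the sense that jump times and jump sizes are matched, i.e.\ for each $t$ with $\Delta\tau^N_t(f)>0$ one must produce $t_n\to t$ with $\Delta\tau^N_{t_n}(f^n)\to\Delta\tau^N_t(f)$ (cf.\ \cite[Chapter VI, Lemma 2.25 and Theorem 1.14]{JaSh13}). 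This is exactly where property \eqref{path property a} enters: it guarantees that arbitrarily close to the endpoints $s_0=\tau^N_{t-}(f)$ and $s_1=\tau^N_t(f)$ of an excursion into $C^N$ the path $f$ visits $G^N\setminus\partial G^N$, which yields the upper bound $\limsup_n\Delta\tau^N_{t_n}(f^n)\le\Delta\tau^N_t(f)+3\epsilon$ and prevents the approximating excursions of $f^n$ from being substantially longer than that of $f$. Without this ingredient the asserted convergence \eqref{time changed path converges} is not established.
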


\begin{proof}
	Since $f$ is continuous, the convergence of $(f^n)_{n \in \N}$ to $f$ in the Skorokhod space $\mathcal D([0,\infty[)$ is  by \cite[Chapter VI. Proposition 1.17]{JaSh13} equivalent to locally uniform convergence.
	Moreover, the time changes $t \mapsto \tau^N_t(f)$ and $t \mapsto \tau^N_t(f^n)$ are strictly increasing, right-continuous functions with
	\begin{equation}
		\label{jumps of time change}
		\Delta \tau_t^N(f) :=   \tau_t^N(f) - \tau_{t-}^N(f)  > 0 \iff A^N_{\tau_{t-}^N(f)}(f) = A^N_{\tau_t^N(f)}(f).
	\end{equation}
	
	As $f$ satisfies \eqref{path property b} and $(f^n)_{n \in \N}$ converges locally uniformly to $f$, we have for $\lambda$-almost every $s \in [0,\infty[$ that
	\[
		\lim_{n \to \infty} \mathds 1_{G^N}(f^n(s)) = \mathds 1_{G^N \setminus \partial G^N}(f(s)) = \mathds 1_{G^N}(f(s)).
	\]
	Particularly, the convergence of $(A^N_s(f^n))_{n \in \N}$ to $A^N_s(f)$ holds pointwise and for any $t \in [0,\infty[$
	\begin{equation}
		\label{time change liminf}
		\limsup_{n \to \infty} \tau_t^N(f^n) \leq \tau_t^N(f).
	\end{equation}

	First we show that
	\begin{equation} \label{discrete time change converges 1}
		\Delta \tau_t^N(f) = 0 \implies \lim_{n\to \infty} \tau_t^N(f^n) = \tau_t^N(f).
	\end{equation}
	If $\Delta \tau_t^N(f) = 0$, we have for any sequence $t_k \nearrow t$ that $\lim_{k \to \infty} \tau_{t_k}^N(f) = \tau_t^N(f)$.
	The map $t \mapsto \tau^N_t(f)$ is strictly increasing, therefore there are $s_k \nearrow \tau_t^N(f)$ with  $\tau_{t_k}^N(f) < s_k < \tau_{t}^N(f)$ and $t_k < A_{s_k}^N(f) < t_{k+1}$.
	We find for any $k \in \N$
	\begin{equation} \label{time change limsup}
		\limsup_{n \to \infty} \tau_{t}^N(f^n) \geq \liminf_{n \to \infty} \tau_{t_{k+1}}^N(f^n) \geq  s_k.
	\end{equation}
	Combining \eqref{time change liminf} with \eqref{time change limsup} yields \eqref{discrete time change converges 1}.
	Moreover, we get
	\begin{align*}
		\limsup_{n \to \infty} \Delta \tau_t^N(f^n) &\leq \limsup_{n \to \infty} \tau_t(f^n) - \tau_{t_{k+1}}^N(f^n)
	\\	&= \tau_t^N(f) - \liminf_{n \to \infty} \tau_{t_{k+1}}^N(f^n) \leq \tau_t^N(f) - s_k,
	\end{align*}
	which yields that 
	\begin{equation}
		\label{time change Delta}
		\Delta \tau_t^N(f) = 0 \implies \lim_{n \to \infty} \Delta \tau_t^N(f^n) =  \Delta \tau_t^N(f).
	\end{equation}

	As $\tau_t^N(f)$ is increasing, it can consist of at most countably many jumps.
	Therefore, we have that $(\tau_t^N(f^n))_{n \in \N}$ has $\tau_t^N(f)$ as limit for a dense subset of $[0,\infty[$. 
	By \cite[Chapter VI. Lemma 2.25]{JaSh13} it remains to verify that for any $t \geq 0$ there is a sequence $(t_n)_{n \in \N}$ with
	\begin{enumerate}[label = (\roman*)]
		\item \label{it:discrete time change converges 1} $t_n \to t$, and $t_n \leq t$ if $t$ is continuity point of $\tau^N(f)$,
		\item \label{it:discrete time change converges 2} $\Delta\tau_{t_n}^N(f^n) \to \Delta\tau_t^N(f)$.
	\end{enumerate}

	So assume that $\Delta \tau_t^N(f) > 0$.
	Let $s_0 := \tau_{t-}^N(f)$, $s_1 := \tau_t^N(f)$, and $\bar{s} := \frac{s_0 + s_1}{2}$.
	Let $\epsilon > 0$ be fixed. As $f$ satisfies \eqref{path property a}, there are $s_0^\epsilon$, $s_1^\epsilon$
	with $f(s_0^\epsilon), f(s_1^\epsilon) \in G^N \setminus \partial G^N$ and
	\[
		s_0 - \epsilon \leq s_0^\epsilon < s_0 < s_1 < s_1^\epsilon \leq s_1 + \epsilon.	
	\]
	Since $f^n \to f$ locally uniformly, $f$ is continuous, and $C^N$ is the union of open intervals, there are $s_0^n, s_1^n$ with $f^n(s) \in C^N$, $s \in [s_0^n,s_1^n]$ and $s_i^n \to s_i, i \in \{0,1\}$.
	Write $\bar{s}^n := \frac{s_0^n + s_1^n}{2}$, and note that $s_1^n - s_0^n \leq \Delta \tau_{A_{\bar{s}^n}}^N(f^n)$, whence,
	\begin{equation} \label{delta liminf}
		\Delta \tau_t^N(f) = \lim_{n \to \infty} s_1^n - s_0^n \leq \liminf_{n \to \infty} \Delta \tau_{A_{\bar{s}^n}}^N(f^n).
	\end{equation}
	For $n$ sufficiently large, we have that $f^n(s_0^\epsilon), f^n(s_1^\epsilon) \in G^N \setminus \partial G^N$.
	Since $f^n$ is a c\`adl\`ag path, it spends a positive amount of time during $[s_0^\epsilon,s_0^n[$ and $]s_1^n,s_1^\epsilon + \epsilon]$ in $G^N$.
	Therefore, when $n$ is sufficiently large we have
	\begin{equation}
		\label{delta limsup}
		\Delta \tau_{A_{\bar{s}^n}}^N(f^n) \leq s_1^\epsilon - s_0^\epsilon + \epsilon \leq \Delta \tau_t^N(f) + 3\epsilon.
	\end{equation}
	By \eqref{delta liminf} and \eqref{delta limsup} we have that $(\Delta \tau^N_{A_{\bar{s}^n}}(f^n))_{n \in \N}$ converges to $\Delta \tau^N_t(f)$.
	
	The convergence of $A^N(f^n)$ to $A^N(f)$ is even locally uniform, therefore $A^N_{\bar{s}^n}(f^n) \to A^N_{\bar{s}}(f) = A^N_{\tau_t^N(f)}(f) = t$.
	We conclude that setting $t_n := A^N_{\bar{s}^n}(f^n)$ provides the desired sequence $(t_n)_{n \in \N}$ satisfying item \ref{it:discrete time change converges 1} and \ref{it:discrete time change converges 2}.
	We have shown that $\tau^N(f^n) \to \tau^N(f)$ in $\mathcal D([0,\infty[)$.

	By \cite[Chapter VI. Theorem 1.14]{JaSh13} there are continuous, bijections $(\alpha_n)_{n \in \N}$ on $[0,\infty[$ such that
	\begin{enumerate}[label = (\roman*)]
		\item $\sup_{0 \leq s < \infty} |\alpha_n(s) - s| \to 0$,
		\item $\sup_{0 \leq s \leq T} |\tau_{\alpha_n(s)}^N(f^n) - \tau_{s}^N(f)| \to 0$ for all $T>0$.
	\end{enumerate}
	Denote by $w_f^T$ the modulus of continuity of $f$ restricted to $[0,T]$.
	We have
	\begin{align*}
		\sup_{0 \leq s \leq T} &\left| f^n(\tau^N_{\alpha_n(s)}(f^n)) - f(\tau^N_{s}(f))\right| 
	\\	&\leq \sup_{0 \leq s \leq \tau^N_{\alpha_n(T)}(f^n)} \left| f^n(s) - f(s) \right| + \sup_{0 \leq s \leq T} \left| f(\tau_{\alpha_n(s)}^N(f^n)) - f(\tau^N_s(f)) \right|
	\\	&\leq \sup_{0 \leq s \leq \tau^N_{\alpha_n(T)}(f^n)} \left| f^n(s) - f(s) \right| + w_f^T\left(\sup_{0 \leq s \leq T} \left| \tau_{\alpha_n(s)}^N(f^n) - \tau^N_s(f) \right|\right),
	\end{align*}
	which vanishes uniformly for $n \to \infty$.
	Applying once again \cite[Chapter VI. Theorem 1.14]{JaSh13} establishes the assertion.
\end{proof}

\begin{lemma} \label{lem:path properties}
	The original Brownian motion $B$ puts full mass on paths having the property that
\begin{align}
	\label{path property a}
	B_t(\omega) \in \partial G^N & \implies \forall \epsilon > 0\, \exists s \in [t-\epsilon,t+\epsilon] \text{ s.t. } B_s(\omega) \in G^N \setminus \partial G^N,
	\\	\label{path property b}
		\lambda& \left(t \in [0,\infty[ \colon B_t(\omega) \in \partial G^N \right) = 0.
\end{align}
\end{lemma}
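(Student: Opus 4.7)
The plan is to handle the two properties separately, with \eqref{path property b} being essentially immediate and \eqref{path property a} resting on the key observation that Brownian motion almost surely has no local extremum at any fixed level.

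For \eqref{path property b}, I would apply Fubini's theorem. Since $\partial G^N$ is finite and each $B_t$ is $N(0,1+t)$-distributed, $\mathbb P(B_t \in \partial G^N) = 0$ for every $t \geq 0$, whence
\[
	\E\left[\lambda\left(\{t \in [0,\infty[\,\colon B_t \in \partial G^N\}\right)\right] = \int_0^\infty \mathbb P(B_t \in \partial G^N)\,dt = 0,
\]
which gives \eqref{path property b} almost surely.

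For \eqref{path property a}, I would first establish the intermediate claim that almost surely no local extremum of $B$ attains a value in the finite set $\partial G^N$. The argument rests on the observation that whenever $t^\ast$ is a local maximum (resp.\ minimum) of $B$, one has $B_{t^\ast} = \sup_{s\in[q_1,q_2]} B_s$ (resp.\ $\inf$) for any sufficiently small pair of rationals $q_1 < q_2$ bracketing $t^\ast$. Hence the random set of values attained at local extrema of $B$ is contained in the countable collection
\[
	\mathcal M := \left\{\sup_{s\in[q_1,q_2]} B_s,\, \inf_{s\in[q_1,q_2]} B_s \,\colon\, q_1,q_2 \in \Q \cap [0,\infty[,\ q_1 < q_2\right\}.
\]
Each $Y \in \mathcal M$ has an absolutely continuous law (conditioning on $B_{q_1}$ reduces this to the reflection-principle density for the running extremum of a standard Brownian motion over $[0,q_2-q_1]$), so $\mathbb P(Y = x) = 0$ for each $x \in \R$. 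Countable subadditivity over $\mathcal M$, combined with the finiteness of $\partial G^N$, then yields the desired null event.

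On the resulting full-measure set, whenever $B_t(\omega) = x \in \partial G^N$, the time $t$ is neither a local maximum nor a local minimum of $B(\omega)$, so for every $\epsilon > 0$ the path takes values in $[t-\epsilon,t+\epsilon]$ both strictly above and strictly below $x$. Because $x$ is an endpoint of some interval $[a_n^N,b_n^N]$, one of the two open half-neighborhoods $(x,x+\delta)$ or $(x-\delta,x)$ lies entirely inside $G^N \setminus \partial G^N$ as soon as $\delta$ is smaller than the minimal spacing of $\partial G^N$; shrinking $\epsilon$ by continuity of $B(\omega)$ at $t$ forces the sign-changing witness $s$ into that interior, giving $B_s(\omega) \in G^N \setminus \partial G^N$. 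The main subtlety I foresee is the absolute continuity of the running sup/inf for $q_1 > 0$, but this is handled cleanly by conditioning on $B_{q_1}$ and invoking the reflection principle; no deeper obstacle appears.
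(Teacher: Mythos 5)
Your proof is correct, but it follows a genuinely different route from the paper's. For \eqref{path property b} the paper invokes the occupation formula and Brownian local time, whereas your Fubini--Tonelli computation $\E[\lambda(\{t: B_t\in\partial G^N\})]=\int_0^\infty \PP(B_t\in\partial G^N)\,dt=0$ is more elementary and entirely adequate (the joint measurability of $(t,\omega)\mapsto \mathds 1_{\partial G^N}(B_t(\omega))$ is clear from path continuity). For \eqref{path property a} the paper argues locally at each boundary point: it covers the visits to a fixed level by the countable family of stopping times $\tau^k_\epsilon=\inf\{s>k\epsilon/2: B_s=0\}$ and uses the strong Markov property together with Blumenthal's 0--1 law to show that within time $\epsilon/2$ after each such stopping time the path goes strictly above the level; this directly produces the one-sided witness needed to land in the adjacent interval's interior. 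You instead use the global classical fact that, almost surely, no local extremum of $B$ takes a value in a fixed countable set, proved by embedding the extremal values into the countable family of running suprema/infima over rational time intervals and checking each has an atomless law via the reflection principle (conditioning on $B_{q_1}$ correctly handles the random $N(0,1)$ start). Your route buys a two-sided conclusion for free and avoids the bookkeeping matching visit times to stopping times, at the cost of the (standard but not entirely trivial) absolute-continuity argument for the running extrema; the final geometric step — shrinking $\epsilon$ so that continuity confines the above-level witness to the one-sided interior neighborhood $(x,x+\delta)$ or $(x-\delta,x)$ of the endpoint — is handled correctly. Both arguments are sound; yours is a valid alternative.
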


\begin{proof}
	Property \eqref{path property b} is evident from the occupation formula and the Brownian local time.
	The boundary $\partial G^N$ of $G^N$ consists only of finitely many points.
	Thus, to see \eqref{path property a}, it suffices to show for fixed $\epsilon > 0$ that
	\begin{equation}
		\label{path prop to show}
		\mathbb P\left( \left\{B_t = 0 \implies \exists s \in [t-\epsilon,t+\epsilon] \colon B_s > 0 \right\} \right) = 1.	
	\end{equation}
	Consider the countably, increasing family of stopping times
	\[
		\tau^k_\epsilon(\omega) := \inf\left\{ s > k\frac{\epsilon}{2} \colon B_s = 0\right\},
	\]
	where $k \in \N$.
	We have that $B_t(\omega) = 0$ if and only if there is $k \in \N$ with $|\tau^k_\epsilon(\omega) - t| < \frac{\epsilon}{2}$.
	From here we deduce
	\begin{multline} \label{path prop 1}
		\Big\{ \forall t \in [0,\infty[ \text{ with } B_t = 0 \, \exists s \in [t-\epsilon,t+\epsilon] \text{ s.t. } B_s > 0 \Big\} \\ \supseteq
		\left\{ \forall k \in \N \, \exists s \in \left[\tau^k_\epsilon- \frac{\epsilon}{2}, \tau^k_\epsilon + \frac{\epsilon}{2}\right]\text{ with } B_s > 0 \right\}.
	\end{multline}
	Due to the strong Markov property and Blumenthal's 0-1 law we obtain for any $k \in \N$ that
	\begin{multline} \label{path prop 2}
		\mathbb P\left(\exists s \in \left[\tau^k_\epsilon - \frac{\epsilon}{2}, \tau^k_\epsilon + \frac{\epsilon}{2} \right] \text{ with } B_s > 0 \right) \geq 
		\mathbb P\left(\exists s \in \left[\tau^k_\epsilon, \tau^k_\epsilon + \frac{\epsilon}{2}\right] \text{ with } B_s > 0 \right)
	\\	\geq \mathbb P\left( \exists s \in \left[0,\frac{\epsilon}{2}\right] \text{ with } B_s > 0 \mid B_0 = 0\right) = 1.
	\end{multline}
	By \eqref{path prop 2} and \eqref{path prop 1} we get \eqref{path prop to show}, which concludes the proof.
\end{proof}

\begin{proof}[Proof of Theorem \ref{XNm to XN}]
	By Proposition \ref{BmTNm to BTN} we have convergence in law of $(B^{m},T^{N,m})_{m \in \N}$ to $(B,T^N)$.
	By the Skorokhod representation theorem we may assume w.lo.g.\ that this convergence holds almost surely.
	Due to Lemma \ref{lem:path properties}, we can apply Lemma \ref{lem:time changed path converges}, and find that almost surely
	\begin{equation}
		\left(B_0^m, (B_{\tau^{N,m}_t}^m)_{0 \leq t < \infty},T^{N,m} \right) \to \left(B_0, (B_{\tau^N_t})_{0 \leq t < \infty}, T^N \right)
	\end{equation}
	in $\R \times \mathcal D([0,\infty[) \times [0,\infty[$, when $m \to \infty$.
	It is readily verified that on $\{T^N > 0\} \cup \{ B_0 \in G^N \setminus \partial G^N \}$ we have almost surely that $(X^{N,m})_{m \in \N}$ has $X^N$ as its limit (in $\mathcal D([0,\infty[)$).
	Moreover, $\{T^N > 0\} \cup \{ B_0 \in G^N \setminus \partial G^N \}$ has full probability, hence, we conclude with the assertion.
\end{proof}

\subsection{Proof of Theorem \ref{continuous} and Proposition \ref{finite}}

\begin{proof}[Proof of Proposition \ref{finite}]
	The strong Markov property w.r.t.\ the right-continuous version of its natural filtration $\mathcal F^N$ is readily derived from Lemma \ref{BtauN is Feller}.
	Lemma \ref{lem:path properties} tells us that almost surely for all $t \in [0,\infty[$ the time change $\tau_t < \infty$, thus, $X^N$ is well-defined.
	The trajectories $(\tau_t^N(\omega))_{0 \leq t < \infty}$ are increasing and c\`adl\`ag, whence, $(X_t^N(\omega))_{0 \leq t < \infty}$ is also c\`adl\`ag.

	It remains to convince ourselves that $X^N$ is a martingale with the correct Brownian mar\-ginals.
	By Theorem \ref{XNm and Bm same 1dim dist} we have for any $t \in [0,\infty[$ that $X^{N,m}_t \sim B^m_t$.
	Theorem \ref{XNm to XN} provides weak convergence of $(X^{N,m})_{m \in \N}$ to $X^N$.
	The laws of $(X^{N,m}_t)_{m \in \N}$ converge to $X^N_t$, hence, $X^N_t \sim B_t$.
	The following computation establishes that the second moments of $X^{N,m}_t$ are converging to $t$:
	\begin{align*}
		\mathbb E\left[ (B^m)^2 \right] &= \frac{2}{m^2} \sum_{(i_1,i_2) \in \{1,\ldots, \lfloor m t \rfloor\}^2}\mathbb E\left[\zeta_{i_1}\zeta_{i_2}\right]
		= \frac{1}{m} \#\left\{ (i_1,i_2) \in \{1,\ldots,\lfloor m t \rfloor \}^2 \colon i_1 = i_2 \right\} = \frac{\lfloor m t \rfloor}{m},
	\end{align*}
	when $m \in \N$.
	Therefore, the second moments of $(X^{N,m}_t)_{m \in \N}$ have the second moments of $X^N_t$ as their limit.
	By \cite[Theorem 6.9]{Vi09} the joint distributions $(X_t^{N,m}, X_s^{N,m})_{m \in \N}$, which are martingale couplings when $s \leq t$, converge for all pairs $(s,t) \in [0,\infty[^2$ in 2-Wasserstein distance to the law of $(X_t^N, X_s^N)$.
	Thus, the law of $(X_t^N, X_s^N)$ constitutes a martingale coupling.
	By the Markov property of $X^N$, we find that $X^N$ is a martingale as almost surely
	\[
		\mathbb E\left[ X_s^N \mid \mathcal F_t^N \right] = \mathbb E\left[ X_s^N \mid X_t^N \right] = X_t^N,
	\]
	when $(\mathcal F_t^N)_{0 \leq t < \infty}$ denotes the right-continuous version of the natural filtration of $X^N$.
\end{proof}

\begin{proof}[Proof of Theorem \ref{continuous}]
	The trajectories $(A_t^N(\omega))_{0 \leq t < \infty}$ increase pointwise to the almost surely strictly increasing process $(A_t(\omega))_{0 \leq t < \infty}$.
	Consequentially, the time changes $(\tau_t^N(\omega))_{0 \leq t < \infty}$ converge almost surely pointwise to $(\tau_t(\omega))_{0 \leq t < \infty}$. Hence, there holds almost surely
	\begin{equation}
		(B_{\tau^N_t})_{0 \leq t < \infty} \to (B_{\tau_t})_{0 \leq t < \infty} \text{ in }\mathcal D([0,\infty[).
	\end{equation}
	From here, it is evident that $(X^N)_{N \in \N}$ has $X$ as its weak limit.
	By Proposition \ref{finite}, we get that $X$ has Brownian marginals.
	At each time $t \in [0,\infty[$ the contribution of the ``busy'' particles to the mass on $C$ vanishes, i.e.\ $\mathbb P\left( X_t \in C, T \leq t \right) = 0$.
	Therefore we can consider the transition kernel of $X$ for $X_t \in C$ and $X_t \in G$ seperately.
	For $x \in G$ the transition kernel coincides with the one of the Feller process $(B_{\tau_t})_{0 \leq t < \infty}$ (conditionally on $B_0 = x$).
	For $x \in C$ the transition kernel is given by a mix of a dirac at $x$ plus an appropriate convolution in time of the kernel of  $(B_{\tau_t})_{0 \leq t < \infty}$ (conditionally on $B_0 = x$).
	Altogether, it is possible to explicitly write down the Markov kernel corresponding to $X$, thus, $X$ is a Markov process.

	The martingale property follows by the same argument as in Proposition \ref{finite}.	
\end{proof}

\begin{remark}\label{exp} We have formulated the ``faking'' procedure for the case of a standard Brownian motion. After all, this is the most regular and canonical situation one can imagine. But our construction applies to general martingales of the form

$$ dX_t = \sigma(t,X_t) dB_t,$$
for arbitrary (sufficiently regular) $\sigma(\cdot,\cdot)$.
We only demonstrate this at the hand of the example of an exponential Brownian motion, i.e., for $\sigma(x,t) = x$, so that

$$ X_t = \exp\left(B_t -\frac{t}{2}\right),\qquad t \ge 0, $$
where $B_t$ is a standard Brownian motion, starting at $0$. To produce a ``fake'' exponential Brownian motion, find $0 < t_1 < t_2 < \infty$ and $0<a<b<\infty$ such that, for each $t_1 \le t\le t_2$, the function $x \to xp(x,t)$ is concave so that the density function function $p(t,x)$ is decreasing in view of the heat equation

$$ \frac{\partial}{\partial t} p(t,x) = \frac{\partial^2}{\partial x^2} (xp(t,x)). $$
Now play the same game as we did for Brownian motian $B_t$ on the interval $]0,1[$ on the interval $]a,b[$, i.e.\ choosing a sequence of disjoint intervals $([a_n,b_n])_{n=1}^{\infty}$ whose union is dense in $[a,b]$ but of strictly smaller Lebesgue measure than $b-a$ etc etc. The above construction carries over verbatim. In view of the negativity of $ \frac{\partial}{\partial t} p(t,x)$ we again have a positive rate of the transition from the status of ``lazy'' to ``busy'' particles so that nothing has to be changed.

\end{remark}

\section{An alternative construction based on \cite[Theorem 1.3]{Lo08b}.}\label{GLSection}

In this final section, we present an alternative construction of a continuous Markovian fake Brownian motion based on the following theorem from \cite[Theorem 1.3]{Lo08b}.

\begin{theorem}\label{LowThm}
Let $(\mu_t)_{t\geq 0}$ be a family of probability measures on $\R$, increasing in convex order, and assume that $t\mapsto \mu_t$ is weakly continuous and that each $\mu_t$ has convex support. Then there exists a unique strongly continuous Markov martingale $X$ such that $X_t\sim \mu_t, t\geq 0$. 
\end{theorem}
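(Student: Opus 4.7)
The plan is to split the proof into existence and uniqueness of a continuous strong Markov martingale with the prescribed marginals, with uniqueness being the deeper content.

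For existence, I would use a smoothing and local-volatility approach. First, approximate $(\mu_t)_{t \geq 0}$ by a regularised family $(\mu^\varepsilon_t)$ obtained, for example, by convolving $\mu_t$ with a Gaussian of variance $\varepsilon$ (the mollification being chosen to preserve the convex order and the weak continuity in $t$). Each $\mu^\varepsilon_t$ then has a smooth, strictly positive density, and the associated call price surface is smooth. Dupire's formula produces a continuous, locally bounded-below local volatility $\sigma^\varepsilon(t,x)$, and the SDE
\[
	dX^\varepsilon_t = \sigma^\varepsilon(t,X^\varepsilon_t)\,dB_t,\qquad X^\varepsilon_0 \sim \mu^\varepsilon_0,
\]
admits a unique strong solution, which is automatically a continuous strong Markov martingale. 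A direct Fokker--Planck verification (or Gyöngy's mimicking theorem) shows $X^\varepsilon_t \sim \mu^\varepsilon_t$. Passing to the limit $\varepsilon \downarrow 0$ then uses tightness of the laws on $C([0,\infty[)$ (from uniform second moments inherited from $\mu^\varepsilon_t \to \mu_t$), together with a martingale-problem identification showing that any subsequential limit $X$ is a continuous martingale with $X_t\sim\mu_t$.

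For uniqueness, suppose $X$ and $Y$ are two continuous strong Markov martingales with the same marginal family. The key rigidity fact behind \cite{Lo08b} is that the transition kernel $P_{s,t}(x,\cdot)$ of a continuous strong Markov martingale is forced to be non-decreasing in $x$ with respect to the convex order. Granted this monotonicity, together with the martingale condition $\int y\,P_{s,t}(x,dy) = x$ and the marginal consistency $\int P_{s,t}(x,\cdot)\,\mu_s(dx) = \mu_t$, a lattice-theoretic / coupling argument on the class of admissible kernels pins down $\{P_{s,t}\}_{s \le t}$ uniquely, forcing $X$ and $Y$ to agree in finite-dimensional distributions.

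The main obstacle is establishing the convex-order monotonicity of the transition kernel, which is precisely where path continuity is used in an essential way. Heuristically, if $P_{s,t}(x,\cdot)$ and $P_{s,t}(y,\cdot)$ for $x<y$ were incomparable in convex order, one could couple the process started at the two points so that the paths ``cross'' without ever meeting at an intermediate value, contradicting continuity; making this rigorous requires a careful analysis of coupled meeting times and is the technical heart of Lowther's theorem. A secondary, more routine difficulty is the transfer of the strong Markov property to the limit in the existence argument; this can in fact be handled \emph{a posteriori} by invoking the just-proven uniqueness once \emph{some} continuous Markov martingale with the correct marginals has been exhibited.
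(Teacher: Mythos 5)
You should first note that the paper does not prove Theorem \ref{LowThm} at all: it is imported verbatim from \cite[Theorem 1.3]{Lo08b} and used as a black box, so there is no in-paper argument to compare against. Judged on its own, your sketch does reproduce the correct overall architecture of Lowther's work --- existence via regularised marginals, Dupire-type local volatility and a limiting procedure, uniqueness via a coupling rigidity property of continuous strong Markov martingales, with the strong Markov property of the limit recovered \emph{a posteriori} from uniqueness.

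However, the ``key rigidity fact'' on which your uniqueness argument rests is false as stated. The kernels $P_{s,t}(x,\cdot)$ of a martingale satisfy $\int y\,P_{s,t}(x,dy)=x$, and convex order can only compare measures with \emph{equal} means (both $y\mapsto y$ and $y\mapsto -y$ are convex); hence for $x\neq y$ the measures $P_{s,t}(x,\cdot)$ and $P_{s,t}(y,\cdot)$ are never comparable in convex order, and no kernel can be ``non-decreasing in $x$ for the convex order.'' The properties that path continuity actually forces, and that drive Lowther's uniqueness proof, are (i) monotonicity of $x\mapsto P_{s,t}(x,\cdot)$ in \emph{first-order stochastic dominance}, obtained from the monotone coupling you allude to (two copies started at $x<y$ cannot cross without meeting), and (ii) preservation of convexity, i.e.\ $x\mapsto P_{s,t}f(x)$ is convex for convex $f$; uniqueness then comes from a backward-equation/comparison argument built on these, not from a lattice argument on convex-order-monotone kernels. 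Two further points you leave untouched: the convex-support hypothesis is essential (a marginal whose support has a gap cannot be realised by a continuous martingale without the process freezing, which is exactly the phenomenon the present paper exploits), and in the existence step $\sigma^\varepsilon$ obtained from Dupire's formula need not be bounded away from zero nor yield a well-posed SDE merely from continuity --- Lowther circumvents this with Lipschitz approximations and weak limits. So the skeleton is right, but the load-bearing step of the uniqueness half needs to be replaced by the correct order relation before the argument can be made to work.
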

To start the construction of a fake Brownian motion, choose  a symmetric Borel set $A$ in $\R$ such that $A$ as well as its complement $B := \R \setminus A$ are dense in $\R$ and have positive Lebesgue measure. Let $p$ be the probability that a standard normal lies in $A$ and $q = 1-p$ that it lies in $B$. 

Let $U$ be a standard normal random variable conditional on being in $A$, and $V$ a standard normal conditional on being in $B$. By symmetry of the distributions, we know that  the laws of $t^{\frac{1}{2}} U$ and $t^{\frac{1}{2}} V$ are increasing in the convex order. Indeed, we may write the laws of $U$ and $V$ as integrals over laws of the form $\frac{1}{2} (\delta_x + \delta_{-x})$. Multiplying $x$ with the increasing function $t \mapsto t^{\frac{1}{2}}$ these measures are clearly increasing in convex order. By integrating over these measures we get the same assertion for the laws of $t^{\frac{1}{2}} U$ and $t^{\frac{1}{2}} V$.

Now we are in a position to apply Theorem \ref{LowThm}: there exist continuous strong Markov martingales $Y$ and $Z$ such that $Y_t$ is distributed as $t^{\frac{1}{2}} U$ and $Z_t$ as $t^{\frac{1}{2}} V$, for all $t \ge 0$.

Finally let $w$ be a Bernoulli random variable equal to $1$ with probability $p$ and independent of $Y$ and $Z$. Set $X_t = Y_t$ if $w=1$, and $X_t = Z_t$ if $w=0$.

This process $X$ will be Markov. Indeed at every time $t > 0$ the random variable $X_t$ lies either in $t^{\frac{1}{2}} A$ or in its complement. Depending on this information, the process thus follows the regime of $Y$ or $Z$. In particular, this also shows that the process $X$ is certainly not a Brownian motion. It is also clear that the marginal distributions of the process $X$ are the corresponding Gaussian laws of a Brownian motion.

This concludes the construction.

\bibliographystyle{abbrv}
\bibliography{../MBjointbib/joint_biblio}

\begin{thebibliography}{10}

\bibitem{Al08}
J.~Albin.
\newblock A continuous non-{B}rownian motion martingale with {B}rownian motion
  marginal distributions.
\newblock {\em Statist. Probab. Lett.}, 78(6):682--686, 2008.

\bibitem{AlBu98}
C.~D. Aliprantis and O.~Burkinshaw.
\newblock {\em Principles of real analysis}.
\newblock Gulf Professional Publishing, 1998.

\bibitem{BaDoYo11}
D.~Baker, C.~Donati-Martin, and M.~Yor.
\newblock A sequence of albin type continuous martingales with brownian
  marginals and scaling.
\newblock In {\em S{\'e}minaire de probabilit{\'e}s XLIII}, pages 441--449.
  Springer, 2011.

\bibitem{BePaSc21b}
M.~Beiglb{\"o}ck, G.~Pammer, and W.~Schachermayer.
\newblock From bachelier to modern finance.
\newblock {\em ArXiv e-prints}, 2021.

\bibitem{Du94}
B.~Dupire.
\newblock Pricing with a smile.
\newblock {\em Risk}, 7(1):18--20, 1994.

\bibitem{FaHaKl15}
J.~Y. Fan, K.~Hamza, and F.~Klebaner.
\newblock Mimicking self-similar processes.
\newblock {\em Bernoulli}, 21(3):1341--1360, 2015.

\bibitem{Gy86}
I.~Gy{\"o}ngy.
\newblock Mimicking the one-dimensional marginal distributions of processes
  having an {I}t{\^o} differential.
\newblock {\em Probab. Theory Relat. Fields}, 71(4):501--516, 1986.

\bibitem{HaKl07}
K.~Hamza and F.~Klebaner.
\newblock A family of non-{G}aussian martingales with {G}aussian marginals.
\newblock {\em J. Appl. Math. Stoch. Anal.}, pages Art. ID 92723, 19, 2007.

\bibitem{Ho16}
D.~Hobson.
\newblock Mimicking martingales.
\newblock {\em The Annals of Applied Probability}, 26(4):2273--2303, 2016.

\bibitem{Ho98c}
D.~G. Hobson.
\newblock Volatility misspecification, option pricing and superreplication via
  coupling.
\newblock {\em Ann. Appl. Probab.}, 8(1):193--205, 1998.

\bibitem{Ho13}
D.~G. Hobson.
\newblock Fake exponential {B}rownian motion.
\newblock {\em Statist. Probab. Lett.}, 83(10):2386--2390, 2013.

\bibitem{JaSh13}
J.~Jacod and A.~Shiryaev.
\newblock {\em Limit theorems for stochastic processes}, volume 288.
\newblock Springer Science \& Business Media, 2013.

\bibitem{JoZh16}
B.~Jourdain and A.~Zhou.
\newblock Existence of a calibrated regime switching local volatility model.
\newblock {\em Math. Finance}, 30(2):501--546, 2020.

\bibitem{Lo08b}
G.~Lowther.
\newblock Fitting martingales to given marginals.
\newblock {\em ArXiv e-prints}, Aug. 2008.

\bibitem{MaYo02}
D.~Madan and M.~Yor.
\newblock Making {M}arkov martingales meet marginals: with explicit
  constructions.
\newblock {\em Bernoulli}, 8(4):509--536, 2002.

\bibitem{Ol08}
K.~Oleszkiewicz.
\newblock On fake {B}rownian motions.
\newblock {\em Statist. Probab. Lett.}, 78(11):1251--1254, 2008.

\bibitem{RoWi00}
L.~C.~G. Rogers and D.~Williams.
\newblock {\em Diffusions, Markov processes and martingales: Volume 2, It{\^o}
  calculus}, volume~2.
\newblock Cambridge university press, 2000.

\bibitem{Vi09}
C.~Villani.
\newblock {\em Optimal Transport. Old and New}, volume 338 of {\em Grundlehren
  der mathematischen Wissenschaften}.
\newblock Springer, 2009.

\end{thebibliography}

\end{document}